\def\red{}
\def\pt{\hspace{.2mm}{\cdot}\hspace{.3mm}}
\def\pp{\protect\operatorname{PP}}
\def\card{\protect\operatorname{card}}
\def\hdim{\protect\operatorname{hdim}}
\def\ppn{\protect\operatorname{PP}_n}
\def\TC{\protect\operatorname{TC}}
\def\F{\protect\operatorname{Conf}}
\def\confkRn{\ensuremath{\F_k(\mathbb{R},n)}}
\def\UF{\protect\operatorname{UConf}}
\def\cat{\protect\operatorname{cat}}
\def\secat{\protect\operatorname{secat}}
\def\zcl{\protect\operatorname{zcl}}
\def\cl{\protect\operatorname{cl}}
\def\gdim{\protect\operatorname{gdim}}
\def\cdim{\protect\operatorname{cdim}}
\def\conn{\protect\operatorname{conn}}
\newtheorem{proposition}{Proposition}[section]
\newtheorem{corollary}[proposition]{Corollary}
\newtheorem{definition}[proposition]{Definition}
\newtheorem{theorem}[proposition]{Theorem}
\newtheorem{remark}[proposition]{Remark}
\newtheorem{example}[proposition]{Example}
\newtheorem{lemma}[proposition]{Lemma}
\begin{document}

\title{Linear motion planning with controlled collisions and pure planar braids}
\author{Jes\'us Gonz\'alez, Jos\'e Luis Le\'on-Medina \\ and Christopher Roque}
\date{\empty}

\maketitle
\tableofcontents

\begin{abstract}
We compute the Lusternik-Schnirelmann category (LS-$\cat$) and the higher topological complexity ($\TC_s$, $s\geq2$) of the ``no-$k$-equal'' configuration space $\F_k(\mathbb{R},n)$. This yields (with $k=3$) the LS-cat and the higher topological complexity of Khovanov's group $\ppn$ of pure planar braids on $n$ strands, which is an $\mathbb{R}$-analogue of Artin's classical pure braid group on $n$ strands \cite{MR1386845}. Our methods can be used to describe optimal motion planners for $\ppn$ provided $n$ is small.
\end{abstract}

{\small 2010 Mathematics Subject Classification: 55R80, 55S40, 55M30, 68T40.}

{\small Keywords and phrases: Motion planning, higher topological complexity, sectional category, configuration spaces, controlled collisions, pure planar braids.}

\section{Introduction}
For a topological space $X$ and a positive integer $n$, the configuration spaces $\F(X,n)=\{(x_1,\ldots,x_n)\in X^n\colon x_i\neq x_j \mbox{ for } i\neq j \}$, of $n$ ordered points in $X$, and $\UF(X,n)$, the orbit space of $\F(X,n)$ by the canonical action of the $n$-th permutation group, are central objects of study in pure and applied mathematics. The case $X=\mathbb{C}$ is historically and theoretically important: both $\F(\mathbb{C},n)$ and $\UF(\mathbb{C},n)$ are Eilenberg-MacLane spaces of respective types $(\text{P}_n,1)$ and $(\text{B}_n,1)$. Here $\text{B}_n$ stands for Artin's classical braid group on $n$ strands, and $\text{P}_n$ denotes the corresponding subgroup of pure braids.

Having contractible path components, $\F(\mathbb{R},n)$ and $\UF(\mathbb{R},n)$ are topologically uninteresting. A meaningful and rich $\mathbb{R}$-analogue of $\mathbb{C}$-based configuration spaces arises when the actual definition of a configuration space is relaxed.

For $X$ and $n$ as above, and for an integer $k\geq2$, the ``no-$k$-equal'' (ordered) configuration space $\F_k(X,n)$ is the subspace of the product $X^n$ consisting of the $n$-tuples $(x_1,\ldots,x_n)$ for which no set $\{x_{i_1},\ldots,x_{i_k}\}$, with $i_j\neq i_\ell$ for $j\neq\ell$, is a singleton. The corresponding unordered analogue U$\F_k(X,n)$ is the orbit space of $\F_k(X,n)$ by the canonical action of the $n$-th permutation group. As shown in~\cite{MR3572355}, the homotopy properties of $\F_k(X,n)$ ($\UF_k(X,n)$) interpolate between those of the usual configuration space $\F(X,n)=\F_2(X,n)$ (U$\F(X,n)=\mbox{U}\!\F_2(X,n)$), and those of the cartesian (symmetric) $n$-th power $X^n=\F_k(X,n)$ (SP${}^nX=\mbox{U}\!\F_k(X,n)$), for $k>n$. Moreover, as discussed in~\cite{MR3426383}, no-$k$-equal configuration spaces play a subtle role in the study of the limit of Goodwillie's tower of a space of no $k$-self-intersecting immersions.

For the particular case $k=3$, $\F_3(\mathbb{R},n)$ gives the desired $\mathbb{R}$-analogue of the classical Artin pure braid group. In \cite{MR1386845}, Khovanov introduces $\ppn$ which stands for the group of planar pure braids on $n$ strands, also called pure twin group, and proves that $\F_3(\mathbb{R},n)$ is an aspherical space which classifies $\ppn$-principal bundles. No-$k$-equal configuration spaces on the real line were first considered in~\cite{BLY}, where methods for estimating the size and depth of decision trees are applied to the analysis of the complexity of the problem of determining whether, for given $n$ real numbers, some $k$ of them are equal.

A central goal of this paper is the computation of Farber's topological complexity (TC) of $\F_k(\mathbb{R},n)$ for $k\geq3$. In the process, we compute the Lusternik-Schnirelmann category (cat) and all the higher topological complexities ($\TC_s$, $s\ge2$) of $\F_k(\mathbb{R},n)$.

\begin{theorem}\label{teoremaprincipal}
The Lusternik-Schnirelmann category and the topological complexity of $\F_k(\mathbb{R},n)$ are given by
\begin{align}
\cat\left(\F_k(\mathbb{R},n)\right)={}& \lfloor n/k\rfloor, \mbox{ the integral part of $n/k$, and}\nonumber\\
 \TC(\confkRn)={}&
\begin{cases}
0, & n<k;\\
1, & n=k \mbox{ with $k$ odd}\hspace{.3mm};\\2, & n=k \mbox{ with $k$ even}\hspace{.3mm};\\
2\lfloor n/k\rfloor, & n>k.
\end{cases}\label{respuestacompleta}
\end{align}
\end{theorem}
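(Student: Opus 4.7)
The plan is to combine known structural information about $\F_k(\mathbb{R},n)$ with standard cohomological lower bounds for $\cat$ and $\TC$, and with the classical connectivity--dimension upper bound for $\cat$.

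\emph{Step 1 (topological input).} I would first assemble the structural facts about $\F_k(\mathbb{R},n)$ due to Bj\"orner--Welker, Baryshnikov and, for $k=3$, Khovanov: the space is $(k-3)$-connected, has the homotopy type of a finite CW complex of dimension $(k-2)\lfloor n/k\rfloor$, and its integral cohomology is torsion-free and concentrated in degrees divisible by $k-2$. A key refinement I would need is an explicit family $v_1,\ldots,v_m\in H^{k-2}(\F_k(\mathbb{R},n))$, with $m=\lfloor n/k\rfloor$, indexed by pairwise disjoint $k$-element blocks of $\{1,\ldots,n\}$, whose cup product $v_1\cdots v_m$ generates the top cohomology group.

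\emph{Step 2 (LS-category).} The non-vanishing of $v_1\cdots v_m$ gives $\cl(\F_k(\mathbb{R},n))\geq m$, and hence $\cat(\F_k(\mathbb{R},n))\geq\lfloor n/k\rfloor$. The matching upper bound is the classical connectivity--dimension estimate $\cat(X)\leq \dim(X)/(\conn(X)+1)$, which for a $(k-3)$-connected CW complex of dimension $m(k-2)$ returns precisely $m$.

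\emph{Step 3 (TC, trivial and sphere cases).} If $n<k$ the no-$k$-equal condition is vacuous, so $\F_k(\mathbb{R},n)=\mathbb{R}^n$ is contractible and $\TC=0$. If $n=k$, removing the thin diagonal from $\mathbb{R}^k$ and quotienting by translation and scaling deformation retracts $\F_k(\mathbb{R},k)$ onto $S^{k-2}$, so the classical values $\TC(S^{k-2})=1$ or $2$ according to whether $k-2$ is odd or even give the stated answer.

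\emph{Step 4 (TC for $n>k$, main obstacle).} The upper bound $\TC\leq 2\cat=2\lfloor n/k\rfloor$ is standard. The matching lower bound is the main obstacle: one must produce $2m$ zero-divisors in $H^*(\F_k(\mathbb{R},n)^{2})$ whose cup product lives in (and is non-zero in) the top K\"unneth tensor degree $2m(k-2)$. I would start from the block classes $v_1,\ldots,v_m$ of Step~1 and their basic zero-divisors $\bar v_i=1\otimes v_i - v_i\otimes 1$, then pair each $\bar v_i$ with a second zero-divisor whose form depends on the parity of $k$: when $k$ is even one has $v_i^2\neq 0$, and the square $\bar v_i^{2}=1\otimes v_i^{2}-2v_i\otimes v_i+v_i^{2}\otimes 1$ contributes two factors per block; when $k$ is odd one has $v_i^{2}=0$, and a second degree-$(k-2)$ class has to be extracted from within each block via the explicit presentation of the cohomology ring (in the style of Baryshnikov or Dobrinskaya--Turchin). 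In both cases a careful expansion must identify a surviving cross-term in the top K\"unneth degree; this mixed-parity cohomological bookkeeping, rather than the abstract bounds, is the technical heart of the theorem.
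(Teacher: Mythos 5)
Your Steps 1--3 and both upper bounds match the paper's argument: contractibility for $n<k$, the homotopy equivalence $\F_k(\mathbb{R},k)\simeq S^{k-2}$ for $n=k$, the cup-length lower bound together with $\cat\leq\hdim/(\conn+1)$ for the category, and $\TC\leq 2\cat$. The genuine gap is in Step 4, which you rightly call the heart of the theorem but do not prove, and part of your proposed strategy rests on a false premise. In $H^*(\F_k(\mathbb{R},n))$ the degree-$(k-2)$ generators are elementary preorders in an exterior algebra, and by Remark~\ref{cuandocero} a cup product of two elementary classes can be non-zero only when their transitive-closure product has two disjoint $[\ ]$-blocks; in particular $v_i^2=0$ for \emph{every} $k$, so the claim ``when $k$ is even one has $v_i^2\neq0$'' fails. (Over $\mathbb{Z}$ the even case could still be rescued by the surviving term $-2\,v_i\otimes v_i$ in $\bar v_i^{\,2}$, since the cohomology is torsion-free and $v_1\cdots v_m\neq0$; over the $\mathbb{Z}_2$ coefficients the paper works with, that term dies as well.) For $k$ odd one has $\bar v_i^{\,2}=0$ identically by sign cancellation, so your fallback --- extracting a second degree-$(k-2)$ class from inside each block --- is unavoidable, and it is in fact what the paper does uniformly for all $k$: in the $j$-th block $\{(j-1)k+1,\ldots,jk\}$ it takes the two \emph{overlapping} $(k-1)$-windows giving the classes $x_{(j-1)k+1}$ and $x_{(j-1)k+2}$, forms the $2\lfloor n/k\rfloor$ zero-divisors $y_m=x_m\otimes1+1\otimes x_m$, and proves $\prod_j y_{(j-1)k+1}\,y_{(j-1)k+2}\neq0$ (Theorem~\ref{detallado}).

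That non-vanishing is precisely the computation you defer, and it is not routine. It requires the relations of Lemma~\ref{auxilema} (derived from Baryshnikov's first relation), the ``sequence of leaps'' bookkeeping of Proposition~\ref{traduccion} showing that every cross-term other than the designated one rewrites in the Baryshnikov basis without producing the target basis element, and a separate, more delicate argument when $k\mid n$: there the naive target $\prod_j x_{(j-1)k+1}\otimes\prod_j x_{(j-1)k+2}$ is not basic, and one must instead track the elements $p_{i,1}\otimes p_{i,2}$ built from the auxiliary classes $x'_m$ via item~\ref{rel3} of Lemma~\ref{auxilema}. Without carrying this out, the lower bound $\zcl(\F_k(\mathbb{R},n))\geq 2\lfloor n/k\rfloor$, and hence the theorem for $n>k$, is not established.
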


See Corollary~\ref{lassecuenciales} for the corresponding description of all the higher topological complexities of $\confkRn$. Note that $\TC(\F_k(\mathbb{R},n))=2\lfloor n/k \rfloor$ unless $n=k=2\ell+1$ for some $\ell>0$.

It is worth highlighting a couple of partial similarities between Theorem~\ref{teoremaprincipal} and the topological complexity of the classical configuration spaces $\F(\mathbb{R}^d,n)$ described in~\cite{MR2470845}:
\begin{equation}
\TC(\F(\mathbb{R}^d,n))=\begin{cases} 2n-3, & \mbox{$d$ even;}\\2n-2, & \mbox{$d$ odd.}
\end{cases}\label{markmichael}
\end{equation}
Firstly, both~(\ref{respuestacompleta}) and~(\ref{markmichael}) are linear functions on $n$, of slope 2 in the case of~(\ref{markmichael}), and slope roughly $2/k$ ($1/k$ if $n=k=2\ell+1$) in the case of~(\ref{respuestacompleta}). Further, just as in~(\ref{markmichael}), (\ref{respuestacompleta}) is at most one from maximal possible; (\ref{markmichael}) is precisely one less than maximal possible for $d$ even, while~(\ref{respuestacompleta}) is so only for $n=k$, an odd number.

Since $\TC(X)$ is a homotopy invariant of $X$, the topological complexity of a group $G$ can be defined as that of any of its classifying spaces, just as in the case of the Lusternik-Schnirelman category $\cat(G)$. In the short but influential paper~\cite{MR0085510}, Eilenberg and Ganea laid the grounds for establishing the fact that $\cat(G)$ agrees with the projective dimension of the trivial $\mathbb{Z}[G]$-module $\mathbb{Z}$. On the other hand, a description of $\TC(G)$ depending solely on the algebraic properties of $G$ is an open problem which has captured much of the current attention of the experts in the field.

\begin{corollary}\label{tprincipal}
The category and the topological complexity of $\ppn$ are given by
\begin{align*}
\cat(\ppn)&{}=\lfloor n/3\rfloor,\\
\TC(\ppn)&{}=
\begin{cases}
0, & n<3;\\
1, & n=3;\\
2\lfloor n/3\rfloor, & n>3.
\end{cases}
\end{align*}
\end{corollary}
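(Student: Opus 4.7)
The plan is to derive the corollary directly from Theorem~\ref{teoremaprincipal} by specializing $k=3$, once it is verified that both $\cat$ and $\TC$ of the group $\ppn$ agree with the corresponding invariants of the space $\F_3(\mathbb{R},n)$.

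First I would recall the standard convention: for a (discrete) group $G$, the Lusternik-Schnirelmann category and topological complexity are defined as $\cat(Y)$ and $\TC(Y)$ for any $K(G,1)$; this is legitimate because $\cat$ and $\TC$ are homotopy invariants, and any two classifying spaces of $G$ are homotopy equivalent (or at least weakly equivalent, which suffices for CW models). Next I would quote Khovanov's theorem from \cite{MR1386845}, already mentioned in the introduction, which asserts that $\F_3(\mathbb{R},n)$ is aspherical and that it classifies principal $\ppn$-bundles. Hence $\F_3(\mathbb{R},n)$ is a model for $K(\ppn,1)$, and therefore
\[
\cat(\ppn)=\cat\bigl(\F_3(\mathbb{R},n)\bigr),\qquad \TC(\ppn)=\TC\bigl(\F_3(\mathbb{R},n)\bigr).
\]

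The remaining step is the arithmetic of setting $k=3$ in Theorem~\ref{teoremaprincipal}. The category formula gives $\cat(\ppn)=\lfloor n/3\rfloor$ immediately. For the topological complexity, I would split into the three cases exactly as in the theorem: when $n<3$ the space has the homotopy type of a contractible (or empty/point) complex and $\TC=0$; when $n=3$, since $k=3$ is odd, the middle case of the theorem yields $\TC=1$; and when $n>3$ the last case produces $\TC=2\lfloor n/3\rfloor$. Observe that the case $n=k$ with $k$ even never arises here, which is why it does not appear in the corollary.

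There is essentially no obstacle: the corollary is a direct corollary in the strict sense, and the only thing to be careful about is citing Khovanov's identification of $\F_3(\mathbb{R},n)$ as a $K(\ppn,1)$ and the homotopy invariance of $\cat$ and $\TC$. If one wants to be thorough, one could remark that the classifying space $\F_3(\mathbb{R},n)$ has the homotopy type of a CW complex (it is an open subset of Euclidean space, hence a manifold), so the standard definitions of $\cat$ and $\TC$ for groups apply without technical caveats.
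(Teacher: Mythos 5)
Your argument is correct and is essentially the paper's own (implicit) proof: Khovanov's identification of $\F_3(\mathbb{R},n)$ as a $K(\ppn,1)$ together with the homotopy invariance of $\cat$ and $\TC$ reduces the corollary to the $k=3$ case of Theorem~\ref{teoremaprincipal}, and your case analysis (including the observation that the ``$n=k$ even'' case cannot occur) matches the paper.
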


\begin{remark}{\em
J.~Mostovoy pointed out to the authors that the cartesian product of $\lfloor n/3\rfloor$ copies of $\pp_3$ sits inside $\ppn$ (by cabling sets of 3-strands). In particular, $\ppn$ is hyperbolic only for $n=3,4,5$. In this respect, it is relevant to observe that, while the main result in~\cite{FMHyperbolic} asserts that the topological complexity of a hyperbolic group $\pi$ must be $\cdim(\pi\times\pi)-\delta_\pi$ with $\delta_\pi\in\{0,1\}$, $\pp_3$ seems to be the only known hyperbolic group $\pi$ with $\delta_\pi=1$.
}\end{remark}

Cases with $n\leq5$ in Corollary~\ref{tprincipal} are recovered in Section~\ref{seccionhn} with short proofs of the facts that $\pp_1$ and $\pp_2$ are trivial groups, whereas $\pp_3$, $\pp_4$ and $\pp_5$ are free groups of respective ranks $1$, $7$, $31$.  (The assertion for $\pp_5$ appears as Conjecture~3.5 in~\cite{Barda}.) The fact that $\ppn$ is free for $3\le n\le 5$ has an interesting reminiscence for $n=6$. A direct computation (verifiable using the computational algebraic system GAP) using the Reidemeister-Schreier process reveals a group isomorphism $\pp_6\cong H_6\ast F$, where $F$ is a free group of rank at least $45$. Details of such a fact, as well as potential extensions for groups $\ppn$ with $n\geq6$, are the topic of the forthcoming paper~\cite{MR}. Here we remark that, in any decomposition $\pp_n\cong H_n\ast F$ with $F$ free, the $\cat$ and $\TC$ values of $H_n$ are forced to agree with those of $\pp_n$.
\begin{corollary} Assume a group isomorphism $\ppn\cong H_n\ast F$ holds for $n\ge6$ with $F$ a free group. Then
$\cat(H_n)=\lfloor n/3\rfloor$ and $\TC(H_n)=2\lfloor n/3\rfloor$.
\end{corollary}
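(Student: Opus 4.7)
The plan is to combine the values of $\cat(\pp_n)$ and $\TC(\pp_n)$ established in Corollary~\ref{tprincipal} with standard formulas for the category and topological complexity of a wedge, using that $K(H_n,1)\vee K(F,1)$ is a model for the classifying space of $\pp_n\cong H_n\ast F$.

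For the upper bounds I would first observe that the projection $\pp_n\cong H_n\ast F\twoheadrightarrow H_n$ killing $F$ exhibits $H_n$ as a retract of $\pp_n$. Since both $\cat$ and $\TC$ are monotone under retractions of classifying spaces of discrete groups, Corollary~\ref{tprincipal} immediately yields $\cat(H_n)\le\lfloor n/3\rfloor$ and $\TC(H_n)\le 2\lfloor n/3\rfloor$.

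For the matching lower bounds I would invoke two standard identities for a wedge of well-pointed connected CW complexes $A$ and $B$: the classical formula $\cat(A\vee B)=\max\{\cat A,\cat B\}$, and Dranishnikov's identity $\TC(A\vee B)=\max\{\TC A,\TC B,\cat(A\times B)\}$. Applied to the wedge decomposition of $K(\pp_n,1)$ these read
\[
\lfloor n/3\rfloor=\max\{\cat(H_n),\cat(F)\}, \qquad 2\lfloor n/3\rfloor=\max\{\TC(H_n),\TC(F),\cat(H_n\times F)\}.
\]
Because $F$ is free, $\cat(F)\le 1$ and $\TC(F)\le 2$; combining the product inequality $\cat(H_n\times F)\le\cat(H_n)+\cat(F)$ with the upper bound on $\cat(H_n)$ just obtained then gives $\cat(H_n\times F)\le\lfloor n/3\rfloor+1$. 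The hypothesis $n\ge 6$ forces $\lfloor n/3\rfloor\ge 2$, and hence $2\lfloor n/3\rfloor\ge 4>\lfloor n/3\rfloor+1$. Thus on the right-hand sides above every entry other than the one involving $H_n$ is strictly smaller than the corresponding left-hand side, so the maxima must be realized by the $H_n$-terms. This forces $\cat(H_n)=\lfloor n/3\rfloor$ and $\TC(H_n)=2\lfloor n/3\rfloor$, as required.

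The only non-routine ingredient is Dranishnikov's wedge formula for $\TC$; everything else reduces to the elementary arithmetic comparison made possible by $n\ge 6$. In particular, no structural information about $H_n$ beyond the assumed isomorphism $\pp_n\cong H_n\ast F$ enters the argument, which is precisely why the corollary can be stated without specifying $H_n$.
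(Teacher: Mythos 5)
Your argument is correct and is essentially the paper's own proof: both rest on Corollary~\ref{tprincipal} together with the free-product formulas $\cat(G_1\ast G_2)=\max\{\cat G_1,\cat G_2\}$ and $\TC(G_1\ast G_2)=\max\{\TC G_1,\TC G_2,\cat(G_1\times G_2)\}$ (the latter due to Dranishnikov--Sadykov), with the hypothesis $n\ge 6$ ensuring the $H_n$-terms realize the maxima. You merely spell out the arithmetic the paper calls ``immediate'' and add a harmless (redundant) retraction argument for the upper bounds.
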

\begin{proof}
This is an immediate consequence of Corollary~\ref{tprincipal} and the formulae
\begin{align*}
\cat(G_1\ast G_2)&{}={}\max\{\cat(G_1),\cat(G_2)\},\\
\TC(G_1\ast G_s)&{}=\max\{\TC(G_1),\TC(G_2),\cat(G_1\times G_2)\}
\end{align*}
for the free product $G_1\ast G_2$ of arbitrary groups $G_1$ and $G_2$ (the $\TC$ formula has recently been proved in~\cite{dransady}).
\end{proof}

Theorem~\ref{teoremaprincipal} has potential applications to current technological developments. For instance, $\confkRn$ is the state space of a system consisting of $n$ distinguishable points moving on an interval, and subject to the restriction that $k$-multiple collisions are forbidden. For practical applications it is convenient to replace points by intervals of a fixed (suitably small) radius, changing the no-$k$-multiple-collision condition by the requirement that no $k$ intervals have a common overlapping. Indeed, it is known (see~\cite{MR3426383}) that the configuration space based on intervals is homotopic to the one based on points. In this context, if the moving objects are equipped with communication sensors, and the radius of the intervals are thought of as the communication range of each of the moving objects, then the no-$k$-multiple-collision condition corresponds to the requirement that at most $k-1$ vehicles moving on a highway can communicate at any given time. 

With an eye on further potential applications, no-$k$-equal configuration spaces are generalized in the short \red{final} Section~\ref{sectioncontrolled}, where we introduce configuration spaces $\F_K(X,n)$ with collisions controlled by a simplicial complex $K$. A driving motivation (that arose from a lecture of~\cite[Section~2.4]{MR2605308}) is that such spaces (with $X=\Gamma$ a graph) would seem to be a natural space of states for problems in digital microfluidics (see~\cite{digit,wet}). In such processes, manipulation of droplets embedded on an inert oil suspension is performed by suitable application of currents through a grid of wires (the graph $\Gamma$) in order to propel droplets through the wires (due to dynamic surface tension effects). In such a setting, motion planning with controlled collisions (encoded by the complex $K$) corresponds to specific mixing process instructions: droplets of various chemical or biological agents would be positioned, mixed, split, and directed to final outputs, all in parallel ---an efficient ``lab on a chip''.

It should also be remarked that configuration spaces $\F_K(X,n)$ with collisions controlled by a simplicial complex $K$ are also interesting outside applications. We thank Victor Turchin for bringing to our attention that these spaces (with $X=\mathbb{R}$) were used in~\cite{dobrinskayaloops} to study the homology of the loop space on a polyhedral product $(X_1,\ldots,X_m)^K$ when each space $X_i$ is simply connected.

\section{Preliminaries}
\subsection{\red{LS} category and topological complexity}\label{subsectionTC}

\red{For a space $X$,} the Lusternick-Schnirelmann category, \red{cat$(X)$}, and the topological complexity, \red{$\TC(X)$, both homotopy invariants of $X$, are} special cases of the notion of sectional category (or Schwarz genus) of a fibration. Recall that the (reduced) sectional category of a fibration $p:E\to B$, $\secat(p)$, is defined as the \red{smallest non-negative integer} $k$ so that there exists an open covering of the base $B = \red{U_0 \cup U_1 \cup \cdots \cup U_k}$ such that the fibration $p$ admits a \red{continuous} section on each $U_i$, see \cite{Sch58}\footnote{\red{Schwarz' original (unreduced) definition is recovered as $\text{genus}(p)=\secat(p)+1$.}}. As \red{a} special \red{case}, we obtain the (reduced) Lusternick-Schnirelmann category of a space $X$, $\cat(X)$, defined as the sectional category of the fibration $e_{1}\colon P_0(X)\to X$, where $P_0(X)$ is the space of based paths on $X$ and $e_1$ is the evaluation map given by $e_1(\gamma)=\gamma(1)$. On the other hand, the (reduced) topological complexity of a space $X$, $\TC(X)$, is defined as the sectional category of the fibration $e_{0,1}\colon P(X)\to X\times X$, where $P(X)$ is the space of free paths on $X$ and $e_{0,1}$ is the double evaluation  map given by $e_{0,1}(\gamma)=(\gamma(0),\gamma(1))$. The open\footnote{For practical purposes, the openness condition on local domains can be replaced (without altering the resulting numerical value of $\TC(X)$) by the requirement that local domains are pairwise disjoint Euclidean neighborhood retracts (ENR).} sets $U_i$ covering $X\times X$ so that $e_{0,1}$ admits a continuous section on each $U_i$ are called local domains, and the corresponding local sections are called local rules. The system of local domains and local rules is called a motion planner for~$X$. A motion planner is said to be optimal if it has $\TC(\red{X})$ local rules. 
As explained by Farber in his seminal work~\cite{Far,MR2074919}, this concept gives a homotopical framework for studying the motion planning problem in robotics. Indeed, $\TC(X)$ gives a measure of the complexity of motion-planning an autonomous system with state-space $X$ and which should perform robustly within a noisy environment.

Most of the existing methods to estimate the topological complexity of a given space are cohomological in nature and are based on some form of obstruction theory. One of the most (simple and) successful such methods is: 

\begin{proposition}\label{ulbTCn}
Let $X$ be a $c$-connected space $X$ having the homotopy type of a CW complex, then 
\red{$$\cl(X)\leq\cat(X)\leq \hdim(X)/(c+1)
\quad\mbox{and}\quad
\zcl(X)\leq\TC(X)\leq 2 \cat(X).$$}
\end{proposition}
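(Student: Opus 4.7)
All four inequalities are classical; I would treat the two lower bounds together and the two upper bounds separately, leaning on cup-product obstruction theory on the one hand and on the Ganea--Whitehead machinery on the other.

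For the lower bounds $\cl(X)\le\cat(X)$ and $\zcl(X)\le\TC(X)$, suppose $\cat(X)\le k$, so $X=U_0\cup\cdots\cup U_k$ with each $U_i$ null-homotopic in $X$. Every reduced class $u\in\widetilde H^*(X)$ vanishes on $U_i$ and therefore lifts to a relative class in $H^*(X,U_i)$; a cup product of $k+1$ such classes then lies in $H^*(X,X)=0$, proving $\cl(X)\le k$. The identical template applies to any cover of $X\times X$ realizing $\TC(X)$, with the role of $\widetilde H^*(X)$ played by the ideal of zero-divisors, i.e.\ the kernel of $\smile\colon H^*(X)\otimes H^*(X)\to H^*(X)$: a local section of $e_{0,1}$ over $U\subset X\times X$ deforms $U$ into the diagonal, so every zero-divisor restricts to zero on $U$, and the same relative-cohomology argument produces $\zcl(X)\le\TC(X)$.

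For $\cat(X)\le\hdim(X)/(c+1)$, I would invoke the Ganea fibration $p_n\colon G_n(X)\to X$, whose fibre is the $(n+1)$-fold join $(\Omega X)^{*(n+1)}$ and is $((n+1)(c+1)-1)$-connected when $X$ is $c$-connected. Since $\cat(X)\le n$ is equivalent to the existence of a section of $p_n$, a standard obstruction-theoretic argument on a CW model of $X$ of dimension $\hdim(X)$ produces a section as soon as $(n+1)(c+1)>\hdim(X)$, i.e.\ $n\ge\hdim(X)/(c+1)$. For $\TC(X)\le 2\cat(X)$, I would combine the easy inequality $\TC(X)\le\cat(X\times X)$ (on any $U\subset X\times X$ contractible to a point $(a,b)$, fix any free path from $a$ to $b$ and transport it along the contraction to obtain a local rule) with the standard product inequality $\cat(Y\times Z)\le\cat(Y)+\cat(Z)$ applied to $Y=Z=X$.

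The only substantive technicality is the Ganea dimension/connectivity estimate; the other three bounds follow almost formally from the definitions together with the naturality of the cup product and the product estimate for $\cat$.
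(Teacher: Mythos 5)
The paper states this proposition without proof, treating it as classical, so there is no in-text argument to compare against; your proof is correct and is exactly the standard one. The relative-cohomology argument for the two lower bounds (lift a class vanishing on $U_i$ to $H^*(X,U_i)$, multiply, land in $H^*(X,X)=0$; same for zero-divisors, using that a local section of $e_{0,1}$ deforms $U$ into the diagonal), the Ganea-fibration obstruction argument for $\cat(X)\leq\hdim(X)/(c+1)$, and the chain $\TC(X)\leq\cat(X\times X)\leq2\cat(X)$ are precisely the classical proofs of Schwarz and Farber. One small slip worth fixing: the $(n+1)$-fold join of the $(c-1)$-connected space $\Omega X$ is $\bigl((n+1)(c+1)-2\bigr)$-connected, not $\bigl((n+1)(c+1)-1\bigr)$-connected (the join of a $p$-connected space with a $q$-connected space is $(p+q+2)$-connected, so $m$ copies of a $p$-connected space give connectivity $m(p+2)-2$). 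Since the obstructions to sectioning $p_n$ over a CW model of dimension $\hdim(X)$ vanish exactly when the fibre is $(\hdim(X)-1)$-connected, the corrected connectivity still produces a section as soon as $(n+1)(c+1)>\hdim(X)$, which is the threshold you actually invoke, so your conclusion is unaffected. Also note that the lower bounds as stated in the paper allow arbitrary local coefficient systems; your argument goes through because a positive-dimensional class restricted along a null-homotopic inclusion factors through the cohomology of a point, which vanishes in positive degrees.
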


The notation $\hdim(X)$ stands for the (cellular) homotopy dimension of $X$, i.e.~the minimal dimension of CW complexes having the homotopy type of $X$. On the other hand, the \red{cup-length of $X$, $\cl(X)$, and the} zero-divisor cup-length of $X$, $\zcl(X)$, \red{are} defined in purely cohomological terms. \red{The former one is the largest non-negative integer $\ell$ such that there are coefficients systems $A_1,\ldots,A_\ell$ over $X$ and corresponding positive-dimensional classes $c_j\in H^*(X;A_{j})$ so that the product $c_1\cdots c_\ell\in H^*(X;\bigotimes_i A_i)$ is non-zero. Likewise, $\zcl(X)$} is the largest non-negative integer $\ell$ such that there are coefficients systems $A_1,\ldots,A_\ell$ over $X\times X$ and corresponding classes $z_j\in H^*(X\times X;A_{j})$, each with trivial restriction under the diagonal inclusion $\Delta\colon X \hookrightarrow X\times X$, and so that the product $z_1\cdots z_\ell\in H^*(X\times X;\bigotimes_i A_i)$ is non-zero. Each such class $z_i$ is called a zero-divisor for $X$. Throughout this work, we will only be concerned with simple coefficients in $\mathbb{Z}_2$, and will omit reference of coefficients in writing a cohomology group $H^*(X)$. In these terms, $\Delta^*\colon H^*(X\times X)=H^*(X)\otimes H^*(X)\to H^*(X)$ is given by cup-multiplication, which explains the name ``zero-divisors''.

All definitions and results reviewed in this subsection have corresponding analogues for Rudyak's higher topological complexity, see~\cite{bgrt,Ru10} for details.

\subsection{Preorders and the cohomology ring of \confkRn}\label{sectioncohomology}
We recall the description of the cohomology ring\footnote{Recall we only take mod 2 coefficients.} $H^*(\confkRn)$ ---see~\cite{Bary,MR3426383}. A binary relation $R$ on a set $S$ is any subset of the cartesian product $S\times S$. As usual, we write $xRy$ as a substitute for $(x,y)\in R$. A preorder is a binary relation $\preceq$ on $S$ which is reflexive ($x\preceq x, \forall x\in {S}$) and transitive ($x\preceq y\preceq z\Rightarrow x\preceq z,\forall x,y,z\in{S}$). For instance, the diagonal $\Delta_S=\{(x,x)\colon x\in S\}$ and the entire cartesian product $S\times S$ are preorders which are called empty and full, respectively. Whenever the preorder is understood, a situation where $x\preceq y$ and $y\preceq x$ is denoted by $x\approx y$. Thus, a partial order is a preorder where $x\approx y$ occurs only with $x=y$. We write $x\prec y$ when both $x\preceq y$ and $x\neq y$ hold.

The transitive closure of a binary relation $R$ on $S$ is the smallest transitive binary relation on $S$ containing $R$. In particular, the transitive closure of (the union of) two preorders is automatically a preorder. This yields a commutative and associative binary operation $\circ\colon\mathcal{P}(S)\times\mathcal{P}(S)\to\mathcal{P}(S)$ on the set $\mathcal{P}(S)$ of preorders on $S$ having the empty preorder as a two-sided neutral element.

Fix positive integers $n$ and $k$ with $3\le k\leq n$. Baryshnikov describes the cohomology ring $H^*(\confkRn)$ in terms of what he calls \emph{string} preorders, i.e., preorders which are \emph{almost} determined by a ``height'' function. Explicitly, a preorder $\preceq$ on $[n]$ is string if there is a preorder-preserving map $h\colon [n]\to \mathbb{R}$ (where $\mathbb{R}$ is equipped with the standard order) satisfying $x\prec y$ whenever $h(x) < h(y)$, and in such a way that the restriction of $\preceq$ to each ``level'' set $h^{-1}(r)$ ($r\in\mathbb{R}$) is either the empty preorder or the full preorder (the height function would fully recover the string preorder if the former would remember which level sets are empty and which are full). Thus, a string preorder $\preceq$ can be spelled out through the ordered list (or string) of non-empty level sets of a corresponding height function for~$\preceq$, where the list is ordered increasingly\footnote{In~\cite{Bary}, level sets are ordered decreasingly from left to right; this difference is immaterial.} from left to right according to the height values, and enclosing each level subset $I\subseteq[n]$ within either [ ]-brackets, if the restriction of $\preceq$ to $I$ is full,  or ()-brackets, if the restriction of $\preceq$ to $I$ is empty. By convention, a level set with a single element has to be enclosed within ()-brackets. 

A string preorder is said to be:
\begin{itemize}
\item[(a)] \emph{elementary}, if it has the form $(I)[J](K)$ with $\card(J)=k-1$.
\item[(b)] \emph{admissible}, if it has the form $(I_0)[J_1](I_1)[J_2]\cdots[J_d](I_d)$ with $\card(J_i)=k-1$ for all $i=1,\ldots d$. In such a case, the admissible string preorder is said to have \emph{dimension~$(k-2)d$}. Elementary string preorders are thus admissible and have dimension~$k-2$.
\item[(c)] \emph{basic}, if it is specified by a string $(I_0)[J_1](I_1)[J_2]\cdots[J_d](I_d)$ satisfying $\card(J_i)=k-1$ and $\max(J_i\cup I_i)\in I_i$, for all $i=1,\ldots, d$ (the maximal element of $J_i\cup I_i$ is taken with respect to the standard order of integers).
\end{itemize}

\begin{remark}\label{pdd2}{\em
An admissible (basic) preorder $(I_0)[J_1](I_1)[J_2]\cdots[J_d](I_d)$ of dimension $(k-2)d$
factors as $\varepsilon_1\circ\cdots\circ\varepsilon_d$, where 
$$\varepsilon_i=\left(\rule{0mm}{4mm}I_0\cup J_1\cup I_1\cup\cdots\cup J_{i-1}\cup I_{i-1}\right)\left[\rule{0mm}{4mm}J_i\right]\left(\rule{0mm}{4mm}I_i\cup J_{i+1}\cup I_{i+1}\cup\cdots\cup J_d\cup I_d\right)$$
is an elementary (basic) preorder of dimension $k-2$. As a partial converse, note that, for string preorders $(I)[J](K)$ and $(I')[J'](K')$ (possibly non-elementary), the condition $I\cup J\subseteq I'$ implies the equality
\begin{equation}\label{pdeee}
\left(\rule{0mm}{4mm}I\right)\left[\rule{0mm}{4mm}J\right]\left(\rule{0mm}{4mm}K\right)\circ\left(\rule{0mm}{4mm}I'\right)\left[\rule{0mm}{4mm}J'\right]\left(\rule{0mm}{4mm}K'\right)=\left(\rule{0mm}{4mm}I\right)\left[\rule{0mm}{4mm}J\right]\left(\rule{0mm}{4mm}K\cap I'\right)\left[\rule{0mm}{4mm}J'\right](K').
\end{equation}
(If $K\cap I'=\varnothing$, the ()-level set $K\cap I'$ must be suppressed from the right term in~(\ref{pdeee}).) Likewise, the condition $I'\cup J'\subseteq I$ implies the equality
\begin{equation}\label{pdeeebis}
\left(\rule{0mm}{4mm}I\right)\left[\rule{0mm}{4mm}J\right]\left(\rule{0mm}{4mm}K\right)\circ\left(\rule{0mm}{4mm}I'\right)\left[\rule{0mm}{4mm}J'\right]\left(\rule{0mm}{4mm}K'\right)=\left(\rule{0mm}{4mm}I'\right)\left[\rule{0mm}{4mm}J'\right]\left(\rule{0mm}{4mm}K'\cap I\right)\left[\rule{0mm}{4mm}J\right](K).
\end{equation}
(Correspondingly, if $K'\cap I=\varnothing$, the ()-level set $K'\cap I$ must be suppressed from the right term of~(\ref{pdeeebis}).) The apparent symmetry on the right of~(\ref{pdeee}) and~(\ref{pdeeebis}) corresponds to the fact that the condition $I\cup J\subseteq I'$ ($I'\cup J'\subseteq I$) is equivalent, by complementing, to the condition $J'\cup K'\subseteq K$ ($J\cup K\subseteq K'$).
}\end{remark}

The fact that the products in~(\ref{pdeee}) and~(\ref{pdeeebis}) are string does not depend on the assumed inclusions $I\cup J\subseteq I'$ and $I'\cup J'\subseteq I$. Such a property is not explicitly mentioned (but is certainly used) in the original works~\cite{Bary,MR3426383}. We include proof details for completeness.

\begin{lemma}\label{d1sequeda}
The product of two string preorders $(I)[J](K)$ and $(I')[J'](K')$ is string. In particular, if neither the inclusion $I\cup J\subseteq I'$ nor the inclusion $I'\cup J'\subseteq I$ hold, then
\begin{equation}\label{chistoso}
\left(\rule{0mm}{4mm}I\right)\left[\rule{0mm}{4mm}J\right]\left(\rule{0mm}{4mm}K\right)\circ\left(\rule{0mm}{4mm}I'\right)\left[\rule{0mm}{4mm}J'\right]\left(\rule{0mm}{4mm}K'\right)=\left(\rule{0mm}{4mm}I\cap I'\right)\left[\rule{0mm}{4mm}J\cup J'\cup\left(I\cap K'\right)\cup\left(I'\cap K\right)\right]\left(\rule{0mm}{4mm}K\cap K'\right).
\end{equation}
\end{lemma}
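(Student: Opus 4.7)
The plan is to reduce the first assertion to the cases covered by Remark~\ref{pdd2} and then focus on proving formula~(\ref{chistoso}). If $I\cup J\subseteq I'$ or $I'\cup J'\subseteq I$, the equalities~(\ref{pdeee}) and~(\ref{pdeeebis}) already present the product $(I)[J](K)\circ(I')[J'](K')$ as an explicit (possibly level-collapsed) string preorder; note that these two inclusions cannot both hold, since together they would force $I=I'$ and then $J'\subseteq I'$, which is impossible because $J'$ is a non-empty level of $R_2=(I')[J'](K')$ disjoint from $I'$. Hence the remaining task is to prove~(\ref{chistoso}) under the assumption that neither inclusion holds.

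For this I would work with a biheight encoding $(h_1,h_2)\colon[n]\to\{0,1,2\}^2$, where $h_1(x)$ equals $0$, $1$, or $2$ according to whether $x$ lies in $I$, $J$, or $K$, and $h_2$ is defined analogously with respect to $(I',J',K')$. Writing $R_1=(I)[J](K)$ and $R_2=(I')[J'](K')$, the relation $x\preceq_i y$ holds exactly when $x=y$, $h_i(x)<h_i(y)$, or $h_i(x)=h_i(y)=1$. With this dictionary, the right-hand side $\tilde R$ of~(\ref{chistoso}) is the three-level string preorder whose bottom level is $\tilde I=\{x:(h_1(x),h_2(x))=(0,0)\}$, whose top level is $\tilde K=\{x:(h_1(x),h_2(x))=(2,2)\}$, and whose middle level $\tilde J$ is everything else; that $\tilde J$ matches the explicit union $J\cup J'\cup(I\cap K')\cup(I'\cap K)$ is a short bookkeeping check. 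The inclusion $R_1\cup R_2\subseteq\tilde R$ then follows by case inspection on biheights, and transitivity of $\tilde R$ upgrades this to $R_1\circ R_2\subseteq\tilde R$.

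The main obstacle will be the reverse inclusion $\tilde R\subseteq R_1\circ R_2$. Any pair $(x,y)\in\tilde R$ with $h_i(x)\leq h_i(y)$ and not both endpoints sharing an extreme level of $R_i$ already lies in $R_i$. Striking these easy pairs off leaves a short enumeration of remaining biheight patterns for $x,y\in\tilde J$, organized by the $R_1\leftrightarrow R_2$ symmetry and including two \emph{diagonal} patterns in which both points share the biheight $(0,2)$ or $(2,0)$. In each obstruction, the missing two-step chain $x\preceq_1 z\preceq_2 y$ or $x\preceq_2 z\preceq_1 y$ reduces to finding an intermediate $z$ in either $(I\cup J)\cap(J'\cup K')$ or $(J\cup K)\cap(I'\cup J')$. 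The non-emptiness of these two sets is precisely the negation of $I\cup J\subseteq I'$ and of $I'\cup J'\subseteq I$, respectively, which is exactly our standing hypothesis; chaining through such $z$ completes the reverse inclusion.
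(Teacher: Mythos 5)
Your argument is correct and is essentially the paper's own proof in different clothing: the decisive step in both is to use the witnesses guaranteed by the negated inclusions---elements of $(I\cup J)\setminus I'=(I\cup J)\cap(J'\cup K')$ and $(I'\cup J')\setminus I=(I'\cup J')\cap(J\cup K)$---as intermediate points in two-step transitive chains forcing the entire middle level $J\cup J'\cup(I\cap K')\cup(I'\cap K)$ into a single $\approx$-class, with the easy inclusion $R_1\circ R_2\subseteq\tilde R$ handling the rest. The paper merely packages your biheight case enumeration more compactly by first proving $J\approx J'$ and then propagating via $I\cap K'\preceq J\approx J'\preceq I\cap K'$ (and symmetrically for $I'\cap K$).
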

\begin{proof}
Let $\preceq$ stand for the product preorder $(I)[J](K)\circ(I')[J'](K')$, and for subsets $A$ and $B$ of $[n]$ write $A\preceq B$ ($A\approx B$) whenever $a\preceq b$ ($a\approx b$) for all  $(a,b)\in A\times B$. For instance, $I\preceq J\preceq K$ as well as $I'\preceq J'\preceq K'$. Since $$[n]=\left(I\cap I'\right)\coprod\left(\rule{0mm}{4mm}J\cup J'\cup (I\cap K')\cup(I'\cap K)\right)\coprod\left(K\cap K'\right)$$ is clearly a partition, it \red{suffices} to show that $J\approx J'\approx(I\cap K')\approx(I'\cap K)$.

Pick $x\in (I\cup J)\setminus I'$ and $x'\in (I'\cup J')\setminus I$. For any $(j,j')\in J\times J'$, we have
\begin{itemize}
\item ($x'\not\in I\Rightarrow x'\in J\cup K \Rightarrow j\preceq x'$) and ($x'\in I'\cup J'\Rightarrow x'\preceq j'$), thus $j\preceq j'$;
\item ($x\not\in I'\Rightarrow x\in J'\cup K' \Rightarrow j'\preceq x$) and ($x\in I\cup J\Rightarrow x\preceq j$), thus $j'\preceq j$.
\end{itemize}
Therefore $J\approx J'$. The result follows from ($I\cap K'\preceq J\approx J'\preceq K'\Rightarrow I\cap K'\approx J\approx J'$) and ($I'\cap K\preceq J'\approx J\preceq K\Rightarrow I'\cap K\approx J'\approx J$).
\end{proof}

\begin{corollary}\label{conddelem}
Assume $(I)[J](K)$ and $(I')[J'](K')$ are elementary preorders with $I\cup J\nsubseteq I'$ and $I'\cup J'\nsubseteq I$. Then the product in~(\ref{chistoso}):
\begin{itemize}
\item has the form $(I'')[J''](K'')$ with $\card(J'')\ge k-1$.
\item  is elementary if and only if the preorders $(I)[J](K)$ and $(I')[J'](K')$ agree.
\end{itemize}
\end{corollary}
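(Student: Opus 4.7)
The plan is to apply formula~(\ref{chistoso}) from Lemma~\ref{d1sequeda} directly, and then read off both bullets from the resulting expression
$$\bigl(I\cap I'\bigr)\bigl[J\cup J'\cup(I\cap K')\cup(I'\cap K)\bigr]\bigl(K\cap K'\bigr),$$
using crucially that, for any elementary string preorder $(A)[B](C)$, the subsets $A$, $B$, $C$ partition $[n]$ and $|B|=k-1$.

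For the first bullet, the middle level set contains $J$, and $|J|=k-1$, so its cardinality is at least $k-1$ with no further work. For the ``if'' half of the second bullet, when $(I)[J](K)=(I')[J'](K')$ the two cross-terms $I\cap K'=I\cap K$ and $I'\cap K=I\cap K$ are empty (since $I\cap K=\varnothing$), so the product simplifies to $(I)[J](K)$, which is elementary.

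The substantive direction is the ``only if'' half. First I would observe that, assuming the product is elementary, the middle level set has cardinality exactly $k-1$; since it contains $J$ of the same cardinality, it must equal $J$. In particular $J'\subseteq J$, and then $|J'|=k-1=|J|$ forces $J=J'$. Next, the inclusions $I\cap K'\subseteq J$ and $I'\cap K\subseteq J$, combined with $I\cap J=\varnothing=I'\cap J'=I'\cap J$, force $I\cap K'=\varnothing$ and $I'\cap K=\varnothing$. From $[n]=I\sqcup J\sqcup K=I'\sqcup J'\sqcup K'$ and $J=J'$ I get $I\cup K=I'\cup K'$; then $I\cap K'=\varnothing$ yields $I\subseteq I'\cup J'=I'\cup J$, and intersecting with the complement of $J$ gives $I\subseteq I'$. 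The symmetric argument using $I'\cap K=\varnothing$ produces $I'\subseteq I$, hence $I=I'$ and consequently $K=K'$, so the two preorders agree.

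There is no real obstacle: the argument is a sequence of elementary set manipulations once Lemma~\ref{d1sequeda} has been invoked. The only point requiring mild care is to remember that $k\geq 3$ (so $|J|=k-1\geq 2>0$), which ensures that $J\not\subseteq I$ and makes the hypothesis $I\cup J\nsubseteq I'$ automatically compatible with the case $I=I'$ treated in the ``if'' direction; otherwise the hypotheses of the corollary could seem to preclude the equality case.
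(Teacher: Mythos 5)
Your proposal is correct and follows essentially the same route as the paper: both reduce, via the explicit form of the product in~(\ref{chistoso}), to the observation that elementarity forces $J=J'$ and $I\cap K'=\varnothing=I'\cap K$, and then extract $I=I'$ and $K=K'$ from the partition identities $[n]=I\sqcup J\sqcup K=I'\sqcup J'\sqcup K'$. Your closing remark that the hypotheses $I\cup J\nsubseteq I'$ and $I'\cup J'\nsubseteq I$ remain satisfied when the two preorders coincide (because $J\neq\varnothing$) is a worthwhile sanity check, though not strictly needed.
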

\begin{proof}
Note that~(\ref{chistoso}) is elementary if and only if $J=J'$ and $I\cap K'=\varnothing=I'\cap K$.
In such a case:
\begin{itemize}
\item $I\cap K'=\varnothing\Rightarrow I\subseteq I'\cup J'=I'\cup J\Rightarrow I\subseteq I'$;
\item $I'\cap K=\varnothing\Rightarrow I'\subseteq I\cup J=I\cup J'\Rightarrow I'\subseteq I$;
\item $I\cap K'=\varnothing\Rightarrow K'\subseteq J\cup K=J'\cup K\Rightarrow K'\subseteq K$;
\item $I'\cap K=\varnothing\Rightarrow K\subseteq J'\cup K'=J\cup K'\Rightarrow K\subseteq K'$.
\end{itemize}
So in fact $I=I'$ and $K=K'$. 
\end{proof}

We are now ready to state Baryshnikov's description of the ring $H^*(\confkRn)$. Recall we are assuming $\mathbb{Z}_2$ coefficients.

\begin{theorem}[{Baryshnikov~\cite[Theorem~1]{Bary}, Dobrinskaya-Turchin~\cite[Section~4]{MR3426383}}]\label{cohbar}
For $k\ge3$, the cohomology ring $H^*(\confkRn)$ is isomorphic to the (anti)commutative free exterior algebra generated in dimension $k-2$ by the elementary preorders subject to the following relations:
\begin{enumerate}
\item $\sum_{a\in A} \left(A\setminus\{a\}\right)\left[\rule{0mm}{4mm}\{a\}\cup B\right]\left(C\right) = \sum_{c\in C} \left(A\right)\left[\rule{0mm}{4mm}B\cup\{c\}\right]\left(C\setminus\{c\}\right),$ whenever $[n]$ can be written as a disjoint union $[n]=A\coprod B\coprod C$ with $\card(B)=k-2$.
\item $(I)[J](K)\cdot(I')[J'](K')=0$, for elementary preorders $(I)[J](K)$ and $(I')[J'](K')$ whose transitive closure $(I)[J](K)\circ(I')[J'](K')$ has a \text{\em [ ]}-level set of cardinality larger than $k-1$.
\end{enumerate}
\end{theorem}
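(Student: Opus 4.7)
The plan is to obtain Theorem~\ref{cohbar} in three stages: produce the generators, verify the two families of relations, and check that the presentation exhausts the cohomology ring.

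First, I would realize $\confkRn$ as the complement $\mathbb{R}^n\setminus\mathcal{A}_{n,k}$ of the $k$-equal subspace arrangement $\mathcal{A}_{n,k}=\bigcup\{x\in\mathbb{R}^n\colon x_{i_1}=\cdots=x_{i_k}\}$, and apply the Goresky--MacPherson / Ziegler--\v{Z}ivaljevi\'c decomposition. The intersection lattice $P$ consists of set partitions of $[n]$ all of whose non-singleton blocks have cardinality at least $k$; the open order complexes $\Delta(\hat 0,\bar x)$ are wedges of spheres by the shellability results of Bj\"orner--Welker. The codimension-$(k-1)$ coatoms of $P$ each contribute one class in $H^{k-2}(\confkRn)$, and the class associated to the coatom whose unique non-singleton block is a set $J$ is naturally indexed by an elementary preorder $(I)[J](K)$, where the partition $[n]\setminus J=I\sqcup K$ records (through a choice of height function) which singletons sit below and above the colliding block. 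This gives the proposed generators in degree $k-2$.

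Second, I would verify the two relations in a cochain model in which elementary preorders are dual cochains to codimension-$(k-1)$ strata of $\mathcal{A}_{n,k}$. Relation~(1) is an Arnol'd--Orlik--Solomon style identity: it is the coboundary of the cochain dual to a stratum whose associated preorder has the form $(A)[B](C)$ with $\card(B)=k-2$, and the two sides enumerate the two ways of promoting a singleton in $A$ or in $C$ into the colliding block to increase $\card(B)$ to $k-1$. Relation~(2) is the cohomological content of Lemma~\ref{d1sequeda} together with Corollary~\ref{conddelem}: when the composite preorder $(I)[J](K)\circ(I')[J'](K')$ carries a $[\,]$-block of cardinality strictly greater than $k-1$, the cup product is supported on a stratum of $\mathcal{A}_{n,k}$ of codimension exceeding $2(k-2)$, and therefore vanishes in $H^{2(k-2)}$ for dimension reasons.

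Third, I would argue completeness by a counting / basis argument. On the algebraic side, I would show that the basic admissible preorders span the quotient of the free exterior algebra by Relations~(1) and~(2): the factorization $\varepsilon_1\circ\cdots\circ\varepsilon_d$ of Remark~\ref{pdd2}, together with repeated use of~(\ref{pdeee}) and~(\ref{pdeeebis}), reduces any admissible string to one satisfying the maximality condition $\max(J_i\cup I_i)\in I_i$, while Relation~(1) can be used to trade non-basic elementary factors for basic ones. On the geometric side, I would match the resulting count against the known Betti numbers of $\confkRn$, which follow from the Bj\"orner--Welker computation of the Poincar\'e polynomial via the wedge-of-spheres structure of the order complexes.

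The hardest step will be the last one. Generators and relations are relatively local checks, but establishing that the basic preorders are linearly independent modulo the listed relations, and that no further relations exist, requires a careful combinatorial bookkeeping; in practice I would expect to rely on the arguments of Baryshnikov~\cite{Bary} and of Dobrinskaya--Turchin~\cite{MR3426383} for this combinatorial core, rather than reproducing it from scratch.
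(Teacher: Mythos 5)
The first thing to say is that the paper contains no proof of Theorem~\ref{cohbar}: it is imported verbatim from Baryshnikov and from Dobrinskaya--Turchin, so there is no internal argument to measure your proposal against. Judged on its own terms, your outline correctly identifies the standard framework --- the $k$-equal subspace arrangement, its intersection lattice of partitions whose non-singleton blocks have size at least $k$, Bj\"orner--Welker shellability, and the Goresky--MacPherson decomposition for the additive structure --- and your derivation of Relation~(1) as the coboundary of a cochain attached to a $(k-2)$-element colliding block is the right Arnol'd-style mechanism.

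There are, however, two genuine gaps. First, Goresky--MacPherson determines only the \emph{additive} structure of $H^*(\confkRn)$; the cup product on the complement of a subspace arrangement is not in general a formal consequence of the intersection lattice, and identifying it is precisely the hard content of the theorem. Your plan treats the multiplicative verification as a ``relatively local check'' in an unspecified cochain model, and for linear independence of the basic preorders and the absence of further relations you explicitly fall back on the two references whose theorem you are proving, so the core of the argument is deferred rather than supplied. Second, your justification of Relation~(2) is wrong as stated: the product of two classes of degree $k-2$ lands in $H^{2(k-2)}(\confkRn)$, which is generally non-zero (the homotopy dimension is $(k-2)\lfloor n/k\rfloor\ge 2(k-2)$ as soon as $n\ge 2k$), so the vanishing cannot be ``for dimension reasons'' in the target group. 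The correct geometric reason is that a $[\,]$-level set of cardinality at least $k$ in the transitive closure encodes a forbidden $k$-fold collision: the locus that would support the product lies inside the removed $k$-equal diagonal and is therefore empty in $\confkRn$, i.e., the dual strata fail to meet in the complement. Your sentence gestures at excess codimension but conflates the cohomological degree $2(k-2)$ with the geometric codimension $2(k-1)$ of a transverse intersection and draws the wrong conclusion from the comparison.
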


\begin{remark}\label{cuandocero}{\em
Since $H^*(\confkRn)$ is a quotient of an exterior algebra, \red{Remark}~\ref{pdd2}, Lemma~\ref{d1sequeda} and Corollary~\ref{conddelem} imply that a (cup) product $(I)[J](K)\cdot(I')[J'](K')$ of two elementary preorders of dimension 1, $(I)[J](K)$ and $(I')[J'](K')$, can (potentially) be non-zero only when the (transitive-closure) product $(I)[J](K)\circ(I')[J'](K')$ is of dimension 2. Further, the latter condition holds precisely when one (and necessarily only one) of the inclusions $I\cup J\subseteq I'$ and $I'\cup J'\subseteq I$ holds, in which case the (transitive closure) product $(I)[J](K)\circ(I')[J'](K')$ is given by~(\ref{pdeee}) and~(\ref{pdeeebis}), respectively. 
}\end{remark}

Going one step further, Baryshnikov shows that the difference between cup products and transitive-closure products
 can safely be neglected:

\begin{theorem}[{Baryshnikov~\cite[Theorem~2]{Bary}, Dobrinskaya-Turchin~\cite[Section~4]{MR3426383}}]\label{basbar}
Additively, $H^*(\confkRn)$ is free with (graded) basis given by the cup products of elementary preorders whose transitive-closure product is basic---as in the first \red{assertion} in \red{Remark}~\ref{pdd2}.
\end{theorem}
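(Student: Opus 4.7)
The plan is to combine Theorem~\ref{cohbar}'s presentation of $H^*(\confkRn)$ as an exterior algebra on elementary preorders modulo the Arnol'd-type relation~(1) and the annihilation relation~(2) with the multiplicative analysis of Remarks~\ref{pdd2} and~\ref{cuandocero}, and then verify separately that the basic monomials span and that they are linearly independent.

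For spanning, I would first iterate Remark~\ref{cuandocero} to argue that a cup product $\varepsilon_1\cdots\varepsilon_d$ of elementary preorders can contribute a nonzero term to $H^*(\confkRn)$ only when the corresponding transitive-closure product $\varepsilon_1\circ\cdots\circ\varepsilon_d$ is admissible of dimension $(k-2)d$; conversely, by the first assertion in Remark~\ref{pdd2}, every such admissible preorder is uniquely realized (up to order of factors) in this form. This reduces the spanning problem to rewriting each admissible monomial as a sum of basic ones. The reduction would apply relation~(1) to the elementary factor $\varepsilon_i$ indexed by the least $i$ for which $\max(J_i\cup I_i)\in J_i$: choosing the $B$-set of~(1) to be $J_i$ with its maximal element removed, and taking $A = I_0\cup J_1\cup\cdots\cup I_{i-1}$, $C = I_i\cup J_{i+1}\cup\cdots\cup I_d$, expresses $\varepsilon_i$ as a sum of new elementary preorders in which the offending maximal element has been relocated. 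After multiplying by the unchanged factors, Remark~\ref{cuandocero} forces any summands whose transitive-closure products fail admissibility to vanish, and the surviving terms have strictly smaller complexity under a lexicographic measure based on the positions of the non-basic indices; iteration then terminates in basic monomials.

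For linear independence, the main obstacle, my strategy is to match the count of basic admissible preorders of dimension $(k-2)d$ against the rank of $H^{(k-2)d}(\confkRn;\mathbb{Z}_2)$ as extracted from the Poincar\'e series of $\confkRn$ in~\cite{Bary,MR3426383}. The basicness condition is combinatorially rigid---for each $i$ the element $\max(J_i\cup I_i)$ is forced into $I_i$, severely constraining the enumeration---so a direct count partitioned by the block sizes of $(I_0,J_1,I_1,\ldots,J_d,I_d)$ should reproduce the relevant summand of the Poincar\'e polynomial. As a fallback, one can exhibit dual cycles: for each basic preorder $\beta$ of dimension $(k-2)d$, construct a homology class $c_\beta\in H_{(k-2)d}(\confkRn;\mathbb{Z}_2)$ as a product of small local cycles (one per $[J_i]$-block, modeled on the sphere arising from the $(k-1)$-fold near-collision along the line) and verify a Kronecker-type duality $\langle\beta',c_\beta\rangle=\delta_{\beta,\beta'}$ on all basic $\beta'$ of the same dimension, from which both linear independence and the basis property would follow at once.
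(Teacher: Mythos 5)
First, a framing point: the paper does not prove Theorem~\ref{basbar} at all --- it is imported verbatim from Baryshnikov~\cite{Bary} and Dobrinskaya--Turchin~\cite{MR3426383}, the only in-text comment being that non-basic admissible monomials can be rewritten in terms of basic ones by iterating relation~(1) of Theorem~\ref{cohbar}. Your outline follows that standard route, but both halves currently rest on the steps that carry the real content. For spanning, your instance of relation~(1) is not even well posed: with $B=J_i\setminus\{m\}$ (where $m=\max(J_i\cup I_i)\in J_i$), $A=I_0\cup J_1\cup\cdots\cup I_{i-1}$ and $C=I_i\cup J_{i+1}\cup\cdots\cup I_d$, the sets $A,B,C$ fail to partition $[n]$ --- the element $m$ is unaccounted for. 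It must be adjoined to $C$, so that $\varepsilon_i$ appears as the $c=m$ summand on the right-hand side of relation~(1) and the remaining summands place $m$ in the trailing level set, which is the direction basicness requires. More seriously, the termination claim is unjustified: after one such substitution the summands surviving relation~(2) move some element $a\in I_j$ with $j<i$, or some $c\in I_j$ with $j>i$, into $J_i$; deleting $a$ from an earlier $I_j$ can destroy basicness of that \emph{earlier} block (if $a$ was $\max(J_j\cup I_j)$), and the imported element can exceed $m$ and leave block $i$ non-basic. So the least non-basic index need not increase, and the lexicographic measure you propose does not obviously decrease. One needs a global well-ordering of admissible preorders with respect to which the rewriting is triangular; exhibiting it is the actual work of the spanning half.

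For linear independence, the counting route is circular as stated: the Poincar\'e series ``extracted from~\cite{Bary,MR3426383}'' is, in those sources, a consequence of the very basis you are trying to establish. The count must instead be matched against the independent Bj\"orner--Welker computation~\cite{MR1317619} of the homology of no-$k$-equal arrangements (this is exactly the comparison the paper makes in Proposition~\ref{weker-bjorner-severs-white}), and that enumeration is not carried out. The fallback via dual cycles is indeed how~\cite{Bary,MR3426383} proceed, but the assertion $\langle\beta',c_\beta\rangle=\delta_{\beta,\beta'}$ is precisely where the condition $\max(J_i\cup I_i)\in I_i$ enters --- it is what makes the pairing triangular with respect to a suitable order --- and you leave it unverified. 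In short: the architecture is the correct one, but each half is missing its key argument.
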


Cup products of elementary preorders whose corresponding transitive-closure product fails to be basic can be written in terms of basic ones by iterated use of the first relation in Theorem~\ref{cohbar}. The process is clarified in the next section, where we work extensively in terms of Baryshnikov's basis in $H^*(\confkRn)$, and the corresponding tensor basis in $H^*(\confkRn\times \confkRn) \cong H^*(\confkRn)\otimes H^*(\confkRn)$.

\section{$\TC(\confkRn)$}\label{secciondetcf}
Theorem~\ref{teoremaprincipal} is obvious for $n\le k$. In fact, for $n<k$, $\confkRn=\mathbb{R}^n$, which is contractible, so that $\TC(\confkRn)=0$. On the other hand, for $n=k$ and with $\Delta=\{(x,x,\ldots,x)\colon x\in\mathbb{R}\}$, $\F_k(\mathbb{R},n)=\mathbb{R}^k-\Delta\simeq S^{k-2}$, whose topological complexity is well known to be $1$ (respectively $2$) if $k$ is odd (respectively even). We thus assume {$n>k$} in what follows (recall we also assume $k\ge3$).

The homotopy dimension (hdim) and the Luster\-nik-Schnirelmann category (cat) of $\confkRn$ (and thus the assertion in Theorem~\ref{teoremaprincipal} about the latter number) are easily established:

\begin{lemma}\label{dimension}
$\confkRn$ is a ($k-3$)-connected space having $\cat(\confkRn)=\lfloor n/k\rfloor$ and $\hdim(\confkRn)=(k-2)\lfloor n/k\rfloor$. In particular, for $k=3$, both the cohomological dimension ($\cdim$) and the geometric dimension ($\gdim$) of the group $\ppn$ equal $\lfloor n/k\rfloor$.
\end{lemma}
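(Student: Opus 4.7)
The plan is to combine three inputs: the Björner--Welker description of $\confkRn$ as a wedge of spheres, Baryshnikov's basis theorem (Theorem~\ref{basbar}), and the standard upper bounds from Proposition~\ref{ulbTCn}. First I would cite Björner--Welker, whose work on the $k$-equal arrangement complement shows that $\confkRn$ has the homotopy type of a wedge of spheres of dimensions in $\{(k-2)j : 1 \le j \le \lfloor n/k\rfloor\}$. This immediately delivers the $(k-3)$-connectivity and the upper bound $\hdim(\confkRn) \le (k-2)\lfloor n/k\rfloor$.

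Next I would pin down the top cohomology by exhibiting an explicit basic preorder of maximal dimension $(k-2)\lfloor n/k\rfloor$. Writing $d = \lfloor n/k\rfloor$ and $r = n - dk$, set $I_0 = \{1,\ldots,r\}$ (empty when $r=0$), and, for $i = 1, \ldots, d$, take $J_i = \{r+(i-1)k+1,\ldots, r+ik-1\}$ and $I_i = \{r+ik\}$. The string preorder $\alpha = (I_0)[J_1](I_1)\cdots[J_d](I_d)$ is basic because $\max(J_i \cup I_i) = r+ik \in I_i$ for each $i \ge 1$. By Theorem~\ref{basbar}, $\alpha$ represents a nonzero basis element of $H^{(k-2)d}(\confkRn)$, yielding the reverse inequality $\hdim(\confkRn) \ge (k-2)\lfloor n/k\rfloor$. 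Moreover, by Remark~\ref{pdd2}, $\alpha$ factors as a transitive-closure product $\varepsilon_1 \circ \cdots \circ \varepsilon_d$ of $d$ elementary preorders, and the same basis theorem asserts that the cup product $\varepsilon_1 \cdots \varepsilon_d$ agrees with $\alpha$ in $H^*(\confkRn)$; hence $\cl(\confkRn) \ge d$.

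The $\cat$ assertion now follows mechanically: Proposition~\ref{ulbTCn} gives $\cat(\confkRn) \le \hdim(\confkRn)/(k-2) = \lfloor n/k\rfloor$, while the cup-length bound supplies the matching lower bound $\cat(\confkRn) \ge \cl(\confkRn) \ge \lfloor n/k\rfloor$. For the final assertion with $k = 3$, the space $\F_3(\mathbb{R},n)$ is a $K(\ppn,1)$ by Khovanov's theorem, so $\gdim(\ppn) \le \hdim(\F_3(\mathbb{R},n)) = \lfloor n/3\rfloor$; the nonzero class represented by $\alpha$ forces $\cdim(\ppn) \ge \lfloor n/3\rfloor$; and the sandwich $\cdim(\ppn) \le \gdim(\ppn)$ pins down all three quantities. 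I expect the only genuinely nontrivial ingredient to be the Björner--Welker wedge-of-spheres input that supplies connectivity and the hdim upper bound, as the rest of the argument proceeds mechanically from the cohomological data already developed in Section~\ref{sectioncohomology}.
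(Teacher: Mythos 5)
Your opening step is false, and it contradicts the rest of your own argument. For $\lfloor n/k\rfloor\geq 2$ the space $\confkRn$ is \emph{not} a wedge of spheres: Bj\"orner--Welker~\cite{MR1317619} compute its (free) homology, concentrated in degrees $(k-2)j$ with $0\leq j\leq\lfloor n/k\rfloor$, but a wedge of spheres has vanishing cup products of positive-degree classes, whereas the second half of your proposal (correctly) exhibits a nonzero $d$-fold product with $d=\lfloor n/k\rfloor\geq 2$. The wedge description holds only in the range $k\leq n<2k$, i.e.\ $d=1$ --- that is exactly Proposition~\ref{weker-bjorner-severs-white}. What you actually need from the literature is (i) the $(k-3)$-connectivity and (ii) a CW model of dimension $(k-2)\lfloor n/k\rfloor$. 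The paper obtains both from Severs--White~\cite[Theorems~1.1 and~1.2]{MR2915650}, who prove that $\confkRn$ admits a \emph{minimal} cellular model; combined with the homology computation this yields a cell structure in the right dimensions. The minimality result is not cosmetic: for $k\geq4$ one could instead invoke Hurewicz and minimal CW theory for simply connected spaces with free homology, but for $k=3$ the space is not simply connected, and free integral homology vanishing above degree $d$ does not by itself bound $\hdim$ (nor $\cdim(\ppn)$, which requires vanishing of cohomology with arbitrary $\mathbb{Z}[\ppn]$-module coefficients). So the correct citation here is \cite{MR2915650}, not a wedge decomposition.

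Once (i) and (ii) are in place, the remainder of your argument is correct and coincides with the paper's: a top-dimensional basic preorder (your $\alpha$ plays the role of the element~(\ref{bbe})) gives $\cl(\confkRn)\geq\lfloor n/k\rfloor$, the chain $\cl\leq\cat\leq\hdim/(k-2)$ forces all the equalities, and the $k=3$ statement follows from asphericity together with $\cdim\leq\gdim$ and the Eilenberg--Ganea relation $\cat=\cdim$.
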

\begin{proof}
Let $q=\lfloor n/k\rfloor$. The Baryshnikov basis element
\begin{equation}\label{bbe}
\left[1,\dots, k-1\rule{0mm}{4mm}\right]\left(k\rule{0mm}{4mm}\right)\left[k+1,\dots, 2k-1\rule{0mm}{4mm}\right]\left(2k\rule{0mm}{4mm}\right)\cdots\left[(q-1)k+1,\ldots,qk-1\rule{0mm}{4mm}\right]\left(qk\rule{0mm}{4mm}\right)
\end{equation}
is a (non-zero) product of $q$ factors, each being a dimension-1 basis element (see the first \red{assertion} in \red{Remark}~\ref{pdd2}), which implies $q\le\cat(\confkRn)$. On the other hand,~\cite[Theorems~1.1 and~1.2]{MR2915650} imply that $\confkRn$ is $(k-3)$-connected, is not $(k-2)$-connected, and has the homotopy type of a cell complex of dimension $(k-2)q$. The first two assertions in the lemma then follow from the inequality $\cat\le(\hdim)/(\conn+1)$ ---which in turn follows from a standard obstruction-theory argument. The last assertion in the lemma (for $k=3$, so $\hdim(\confkRn)=\gdim(\ppn)$, by definition), follows from the relations $\cat=\cdim\le\gdim$ in~\cite{MR0085510}.
\end{proof}

We have omitted the use of curly braces for level sets within the string preorder~(\ref{bbe}). This convention will be kept throughout the rest of the paper.

The standard inequality $\TC(X)\leq2\cat(X)$ yields $\TC(\confkRn)\leq 2\lfloor n/k\rfloor$. Thus, in view of Proposition~\ref{ulbTCn}, the proof of Theorem~\ref{teoremaprincipal} will be complete once we show
\begin{equation}\label{loquefalta}
2\lfloor n/k \rfloor\leq\zcl(\confkRn), \mbox{ \ for $n>k\geq3$.}
\end{equation}

In order to \red{establish}~(\ref{loquefalta}), we introduce a few key elements in $H^*(\confkRn)$ and in $H^*(\confkRn)^{\otimes2}$. (Recall that all cohomology groups will be taken with $\mathbb{Z}_2$-coefficients, a restriction that is not essential but allows us to simplify calculations.)

\begin{definition}\label{xmxpm}
For a positive integer $m$ satisfying $m+k\le n+2$, consider the elements $x_m,x'_m\in H^{k-2}(\confkRn)$ given by
\begin{align*}
x_m{}={}&\left(1,\ldots,m-2,m-1\rule{0mm}{4mm}\right)\left[\rule{0mm}{4mm}m,m+1,\dots, m+k-2\right]\left(\rule{0mm}{4mm}m+k-1,\ldots,n\right), \\
x'_m{}={}&\left(\rule{0mm}{4mm}1,\ldots,m-2,m\right)\left[\rule{0mm}{4mm}m-1,m+1,\dots, m+k-2\right]\left(\rule{0mm}{4mm}m+k-1,\ldots,n\right),
\end{align*}
where $x'_m$ is defined only for $m\geq2$. Each of the corresponding zero-divisor $y_m=x_m\otimes1+1\otimes x_m$ for $\confkRn$ is central in what follows, with the elements $x'_m$ playing a subtle role.
\end{definition}

Note that $x_m$ and $x'_m$ are Baryshnikov basis elements in $H^*(\confkRn)$ provided $m+k\le n+1$. In fact, as illustrated by the first \red{assertion} in \red{Remark}~\ref{pdd2},
\begin{align}
\hspace{3mm} &\hspace{-3mm}\prod_{j=1}^i x_{(j-1)k+2}=x_2x_{k+2}\cdots x_{(i-1)k+2}\label{mas2}\\&=\left(\rule{0mm}{3.5mm}1\right)\!\left[\rule{0mm}{3.5mm}2,\ldots,k\right]\!\left(\rule{0mm}{3.5mm}k+1\right)\!\left[\rule{0mm}{3.5mm}k+2,\ldots,2k\right]\!\left(2k+1\rule{0mm}{3.5mm}\right)\cdots\left[\rule{0mm}{3.5mm}(i-1)k+2,\ldots,ik\right]\!\left(\rule{0mm}{3.5mm}ik+1,\ldots,n\right)\nonumber
\end{align}
is a basis element in $H^*(\confkRn)$ provided $ik+1\leq n$. Likewise, if $\widetilde{x}_{(j-1)k+1}$ stands for either $x_{(j-1)k+1}$ or $x'_{(j-1)k+1}$ (the latter one being a possibility only for $j\geq2$), then
\begin{align}
\hspace{3mm} &\hspace{-3mm}\prod_{j=1}^i \widetilde{x}_{(j-1)k+1}=\widetilde{x}_1\widetilde{x}_{k+1}\cdots\widetilde{x}_{(i-1)k+1}\label{mas1}\\&=\left[\rule{0mm}{3.5mm}1,\!\cdots\hspace{-.2mm}\hspace{-.5mm},k-1\right]\hspace{-.7mm}\left(\rule{0mm}{3.5mm}k\begin{picture}(0,0)(4,8)\qbezier(3,0)(13,-20)(26,0)\put(3,0){\vector(-1,2){2}}\put(26,0){\vector(1,2){2}}\end{picture}\right)\hspace{-.7mm}\left[\rule{0mm}{3.5mm}k+1,\!\cdots\hspace{-.2mm}\hspace{-.5mm},2k-1\right]\!\cdots\hspace{-.3mm}\left(\rule{0mm}{3.5mm}(i-1)k\begin{picture}(0,0)(4,8)\qbezier(-9,0)(13,-20)(39,0)\put(-9,0){\vector(-1,1){2}}\put(39,0){\vector(1,1){2}}\end{picture}\right)\hspace{-.7mm}\left[\rule{0mm}{3.5mm}(i-1)k+1,\cdots\hspace{-.5mm},ik-1\right]\hspace{-.7mm}\left(\rule{0mm}{3.5mm}ik,\ldots,n\right),\nonumber
\end{align}

\vspace{4mm}\noindent
where curved arrows indicate pairs of elements that might have to be switched (depending on the actual term $\widetilde{x}_{(j-1)k+1}$ under consideration), is a basis element in $H^*(\confkRn)$ provided $ik\leq n$.

\begin{example}\label{caschi}{\em
The condition $3\le k<n$ ensures that both $x_1$ and $x_2$ are Baryshnikov basis elements in $H^*(\confkRn)$, and since $x_1\neq x_2$, we obviously have
\begin{equation}\label{facilito}
y_1y_2=(x_1\otimes1+1\otimes x_1)(x_2\otimes1+1\otimes x_2)=\cdots+x_1\otimes x_2+x_2\otimes x_1+\cdots\neq0.
\end{equation}
So $2\leq\zcl(\confkRn)$, which readily yields~(\ref{loquefalta}) for $2k>n>k\ge3$.
}\end{example}

The proof of~(\ref{loquefalta}) for $n\ge2k$ and $k\ge3$ requires a major generalization of the simple calculation in~(\ref{facilito}). The product indicated in~(\ref{productotote}) below will play the role of the product $y_1y_2$ on the left-hand side of~(\ref{facilito}). Most importantly, the tensor factors $x_1$ and $x_2$ in the two highlighted summands on the right-hand side of~(\ref{facilito}) will be replaced by products of the form~(\ref{mas2}), and by certain products of the form~(\ref{mas1}), some of which are made explicit as follows:
\begin{align*}
p_{i,1}{}={}&\begin{cases}
x_1\left(\prod_{j=1}^{a-1} x_{(2j-1)k+1}x'_{2jk+1}\right) x_{(2a-1)k+1}\,, & \mbox{ if $i=2a\geq2$;} \\
x_1\left(\prod_{j=1}^{a} x_{(2j-1)k+1}x'_{2jk+1}\right), & \mbox{ if $i=2a+1\geq3$,}
\end{cases}\\
p_{i,2}{}={}&\begin{cases}
x_1\left(\prod_{j=1}^{a-1} x'_{(2j-1)k+1}x_{2jk+1}\right) x'_{(2a-1)k+1}\,, & \mbox{ if $i=2a\geq2$;} \\
x_1\left(\prod_{j=1}^{a} x'_{(2j-1)k+1}x_{2jk+1}\right), & \mbox{ if $i=2a+1\geq3$.}
\end{cases}\end{align*}

\begin{theorem}\label{detallado} If the integers $i,k,n$ satisfy $2\le i$, $3\le k$, and $ik\leq n$, then the product
\begin{equation}\label{productotote}
\prod_{j=1}^i y_{(j-1)k+1} y_{(j-1)k+2}\in H^*(\confkRn)^{\otimes2}
\end{equation}
is non-zero. Explicitly:
\begin{enumerate}
\item If $ik+1\le n$, then the expression of~(\ref{productotote}) as a linear combination of Baryshnikov tensor basis elements for $H^*(\confkRn)^{\otimes2}$ uses the Baryshnikov basis element $$\,\prod_{j=1}^i x_{(j-1)k+1}\otimes\prod_{j=1}^ix_{(j-1)k+2}.$$
\item If $ki=n$, then the expression of~(\ref{productotote}) as a linear combination of Baryshnikov tensor basis elements for $H^*(\confkRn)^{\otimes2}$ uses the Baryshnikov basis element $p_{i,1}\otimes p_{i,2}$.
\end{enumerate}
\end{theorem}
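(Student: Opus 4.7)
The plan is to expand the product $\prod_{j=1}^i y_{(j-1)k+1}y_{(j-1)k+2}$ via $y_m = x_m\otimes 1 + 1\otimes x_m$ into $2^{2i}$ raw summands, each of the form $\bigl(\prod x_{m_\alpha}\bigr)\otimes\bigl(\prod x_{m_\beta}\bigr)$, corresponding to a partition of the $2i$ indices into ``left'' and ``right'' halves. By Remark~\ref{cuandocero} together with Corollary~\ref{conddelem}, most raw summands are zero or produce transitive-closure products of dimension strictly smaller than $(k-2)i$; the surviving summands are those in which the chosen generators on each side interlock consecutively via the inclusion conditions of~(\ref{pdeee}) and~(\ref{pdeeebis}). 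I would isolate, on each side of the tensor, the combinations that contribute to the target Baryshnikov basis element and argue that the target receives nonzero coefficient modulo~$2$.

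For part~(1), i.e.~$ik+1\le n$, I would select the ``canonical'' summand in which every $x_{(j-1)k+1}$ is sent to the left tensor factor and every $x_{(j-1)k+2}$ to the right. By (\ref{mas1}) with all choices unprimed, and by (\ref{mas2}), its two tensor factors are already Baryshnikov basis elements agreeing with the claimed target $\prod x_{(j-1)k+1}\otimes\prod x_{(j-1)k+2}$. Any other summand either vanishes by Corollary~\ref{conddelem} (because two consecutively chosen indices with the same second choice fail both inclusions of Remark~\ref{cuandocero}) or yields a basic transitive-closure product whose collection of $[\,]$-blocks is strictly different from the target's; consequently no other summand reduces onto the same basis element and no cancellation occurs.

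For part~(2), i.e.~$ik=n$, the trailing $(\,)$-level set in (\ref{mas2}) collapses to the empty set, so the canonical right factor $\prod x_{(j-1)k+2}$ is no longer basic. I would rewrite the naive products via Baryshnikov's first relation in Theorem~\ref{cohbar}, applied near the trailing $[\,]$-block with the partition $[n]=A\sqcup B\sqcup C$ in which $C=\{n\}$, in order to move the index $n$ out of the terminal $[\,]$-block. Propagating this swap inductively from right to left interchanges the roles of paired indices $\{(j-1)k,(j-1)k+1\}$ at alternating positions $j$, which is precisely the passage from $x_{(j-1)k+1}$ to $x'_{(j-1)k+1}$ that defines $p_{i,1}$ and $p_{i,2}$ (cf.~(\ref{mas1}) and the curved-arrow switches depicted there). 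The parity of $i$ then dictates which alternation pattern lands on each tensor factor, explaining the $i=2a$ versus $i=2a+1$ case split in the definitions of $p_{i,1}$ and $p_{i,2}$.

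The main obstacle is verifying that, after the cascade of relation~(1) substitutions in part~(2), the coefficient of $p_{i,1}\otimes p_{i,2}$ in the fully-reduced expansion does not cancel mod~$2$ against contributions arising from other raw summands that happen to rewrite onto the same basis element. I expect to handle this by induction on $i$, peeling off the two trailing factors $y_{(i-1)k+1}y_{(i-1)k+2}$ and reducing the $i$-step claim to the analogous $(i-1)$-step assertion on the index range $[(i-1)k]$. The inductive hypothesis provides the parity tracking needed to match the alternation in $p_{i,1}$, $p_{i,2}$ with a single surviving raw summand, pinning its reduced coefficient down to $1\in\mathbb{Z}_2$.
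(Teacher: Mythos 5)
Your overall strategy (expand into $2^{2i}$ summands, identify the canonical one, then argue no other summand rewrites onto the target) matches the paper's, but the justification of the crucial non-interference step in part~(1) is wrong. You claim every non-canonical summand ``either vanishes by Corollary~\ref{conddelem} \dots or yields a basic transitive-closure product whose collection of $[\,]$-blocks is strictly different from the target's.'' Neither alternative holds in general. In each summand the consecutive subscripts jump by $k-1$, $k$, or $k+1$, so one of the inclusions of Remark~\ref{cuandocero} always holds and no summand vanishes outright; and whenever a jump equals $k-1$ (e.g.\ in $x_2x_{k+1}$) the intermediate $(\,)$-level set of the transitive-closure product is empty, so the product is \emph{non-basic} and must be rewritten using relation~(1) of Theorem~\ref{cohbar}. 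That rewriting genuinely moves indices between blocks --- for instance $x_2x_{k+1}=x_1x_{k+1}$ (item~\ref{rel0} of Lemma~\ref{auxilema}) --- so a priori a non-canonical summand could very well contribute the target element $\prod x_{(j-1)k+1}\otimes\prod x_{(j-1)k+2}$. The paper closes exactly this gap with the ``sequence of leaps'' bookkeeping: Lemma~\ref{auxilema} and Proposition~\ref{traduccion} show that any summand containing a $(k-1)$-leap rewrites into a sum of basis elements \emph{both} of whose tensor factors begin with the level set $[1,\dots,k-1]$, whereas the right factor of the target begins with $(1)[2,\dots,k]$. Your proposal contains no substitute for this invariant, so part~(1) is not established.

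Part~(2) has the same defect in amplified form. Your idea of applying relation~(1) with $C=\{n\}$ to expel $n$ from the terminal $[\,]$-block and thereby introduce primed generators is indeed how the paper proceeds (item~\ref{rel3} of Lemma~\ref{auxilema}), but you explicitly leave the mod~$2$ cancellation question as something you ``expect to handle by induction on~$i$,'' without exhibiting the induction. The paper instead isolates the alternating summand $(\epsilon_1,\dots,\epsilon_{2a})=(1,2,1,\dots,2)$, whose leap sequences are $(k-1,k+1,\dots)$ and $(k+1,k-1,\dots)$, shows its reduction uses $p_{i,1}\otimes p_{i,2}$ exactly once, and then rules out contributions from all other summands by a case analysis on the final leap pair $(\lambda,\rho)\in\{(k,k),(k-1,k+1),(k+1,k-1)\}$, again relying on the leap invariant and on item~\ref{rel0}. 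Without an argument of this kind (or a worked-out induction), the coefficient of $p_{i,1}\otimes p_{i,2}$ could cancel, and the proof is incomplete.
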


As distilled in Example~\ref{caschi}, the hypothesis $i\geq2$ is relevant only for the second half of Theorem~\ref{detallado}. The actual exceptional case that has to be avoided is $n=k$, for which $y_1y_2$ is forced to vanish (recall the $\mathbb{Z}_2$-coefficients!) in view of the first paragraph of this section. (By working over the integers, rather than over $\mathbb{Z}_2$, the (truly!) exceptional case would only be reduced to that where $n=k$ is odd.)

The validness of~(\ref{loquefalta}) for $n\ge2k$ and $k\ge3$ (i.e.~the cases that remain to be considered) follows from Theorem~\ref{detallado} below by taking $i=\lfloor n/k \rfloor$. So, the rest of the section is devoted to the proof of Theorem~\ref{detallado}.

\begin{lemma}\label{auxilema}
The following relations hold in $H^*(\confkRn)$$:$
\begin{enumerate}
\item\label{rel0} $x_2x_{k+1}=x_1x_{k+1}$, for $n\geq2k-1$.
\item\label{rel1} $x_{n-2k+4} x_{n-k+2}=x_{n-2k+3} x_{n-k+2}=0$, for $n\ge2k-2$.
\item\label{rel2} $x_{n-2k+2} x_{n-k+2} = x_{n-2k+2} x_{n-k+1}$, for $n\ge2k-1$.
\item\label{rel3} $x_{n-2k+1} x_{n-k+2} = x_{n-2k+1} x_{n-k+1} + x_{n-2k+1} x'_{n-k+1}$, for $n\ge2k$.
\item\label{rel4} $x_r x_{r+k} x_{r+2k-1} = x_r x_{r+k-1} x_{r+2k-1}$, for $n\ge r+3k-3$ and $r\ge1$.
\item\label{rel5} $x_r x_{r+k+1} x_{r+2k} = x_r x_{r+k} x_{r+2k} + x_r x'_{r+k} x_{r+2k}$, for $n\ge r+3k-2$ and $r\ge1$.
\end{enumerate}
\end{lemma}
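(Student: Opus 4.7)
The plan is to prove each of the six identities by a single uniform recipe: apply Theorem~\ref{cohbar}(1) to a carefully chosen partition $[n]=A\coprod B\coprod C$ (with $|B|=k-2$) so that the resulting boundary identity contains the desired elementary preorders on each side, and then cup-multiply through by the outer factor(s) common to the target relation. The ``surplus'' terms thereby introduced will be annihilated using Lemma~\ref{d1sequeda} and Theorem~\ref{cohbar}(2): for each such surplus, after cup-multiplication with the outer factor, the transitive-closure product has a $[\,]$-level set of cardinality strictly greater than $k-1$, and hence vanishes.

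For (\ref{rel0}), I would take $A=\{1\}$, $B=\{2,\ldots,k-1\}$, $C=\{k,\ldots,n\}$; the boundary identity reads $x_1=x_2+\sum_{c=k+1}^n W_c$, and cup-multiplication by $x_{k+1}$ kills every $W_c$ since the $[\,]$-level set of the resulting transitive closure contains $\{2,\ldots,k-1,c\}\cup\{k+1,\ldots,2k-1\}\cup\{k\}$, of size at least $2k-2>k-1$. Identities (\ref{rel2}) and (\ref{rel3}) share one boundary identity: with $A=\{1,\ldots,n-k+1\}$, $B=\{n-k+2,\ldots,n-1\}$, $C=\{n\}$ one obtains $x_{n-k+1}+x'_{n-k+1}+\sum_{a=1}^{n-k-1}U_a=x_{n-k+2}$. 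Cup-multiplying by $x_{n-2k+1}$ immediately yields (\ref{rel3}), each $x_{n-2k+1}\cdot U_a$ having an oversized $[\,]$-set of size $\ge 2k-1$; cup-multiplying instead by $x_{n-2k+2}$ yields (\ref{rel2}), for which one additionally verifies that $x_{n-2k+2}\cdot x'_{n-k+1}=0$ via a direct Lemma~\ref{d1sequeda} computation yielding a $[\,]$-set of size $2k-2$. Identities (\ref{rel4}) and (\ref{rel5}) follow the same template with shifted partitions: for (\ref{rel4}) take $A=\{1,\ldots,r+k-1\}$, $B=\{r+k,\ldots,r+2k-3\}$, $C=\{r+2k-2,\ldots,n\}$ and multiply by $x_r\cdot x_{r+2k-1}$; for (\ref{rel5}) take $A=\{1,\ldots,r+k\}$, $B=\{r+k+1,\ldots,r+2k-2\}$, $C=\{r+2k-1,\ldots,n\}$ and multiply by $x_r\cdot x_{r+2k}$. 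In (\ref{rel5}) two distinguished summands survive, namely $x_{r+k}$ (from $a=r+k$) and $x'_{r+k}$ (from $a=r+k-1$), which explains the extra $x'_{r+k}$ term in the conclusion.

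The two vanishing assertions of (\ref{rel1}) require separate handling. For $x_{n-2k+4}\cdot x_{n-k+2}=0$, a direct application of Lemma~\ref{d1sequeda} suffices: neither required inclusion holds, and the transitive-closure $[\,]$-set equals $\{n-2k+4,\ldots,n\}$, of size $2k-3>k-1$ for $k\ge 3$. For $x_{n-2k+3}\cdot x_{n-k+2}=0$, since $x_{n-k+2}$ is itself non-basic, I would rewrite it via the degenerate boundary identity associated with $A=\{1,\ldots,n-k+2\}$, $B=\{n-k+3,\ldots,n\}$, $C=\emptyset$; this produces $\sum_{a\in A}T_a=0$ with $T_{n-k+2}=x_{n-k+2}$, and the same oversized-$[\,]$-set argument annihilates every other $T_a$ upon cup-multiplication by $x_{n-2k+3}$.

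The main obstacle is the bookkeeping: for each of the six partition choices one must explicitly enumerate the elements contributing to the set $J\cup J'\cup(I\cap K')\cup(I'\cap K)$ of Lemma~\ref{d1sequeda} and verify that this union has cardinality strictly greater than $k-1$ in every surplus case. The six arguments are essentially parallel, differing only by index shifts; one must however keep track of the hypotheses $n\ge 2k-1$, $n\ge 2k$, $n\ge r+3k-3$, etc., which ensure that every elementary preorder appearing in the argument is actually defined.
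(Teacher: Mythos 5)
Your proposal is correct and follows essentially the same route as the paper: the same partitions $A\coprod B\coprod C$ fed into relation (1) of Theorem~\ref{cohbar} (your choice for item~1 is exactly the paper's item~5 partition specialized to $r=2-k$), with the surplus terms killed by the oversized-bracket criterion coming from Lemma~\ref{d1sequeda} and relation (2) of Theorem~\ref{cohbar}. The only cosmetic deviation is your use of a $C=\varnothing$ partition to get $x_{n-2k+3}x_{n-k+2}=0$, where the paper instead reuses its $C=\{n\}$ expansion of $x_{n-k+2}$ and checks that every summand dies against $x_{n-2k+3}$.
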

\begin{remark}{\em
The numeric restrictions on $k$, $n$ and $r$ ensure that each of the factors $x_m$ in the six items above is an element of $H^*(\confkRn)$.
}\end{remark}
\begin{proof}[Proof of Lemma~\ref{auxilema}]
All these equalities follow from Theorem~\ref{cohbar} and Remark~\ref{cuandocero}. We give full details for completeness.

Assume $n\ge2k-2$. Take $A=\{1,\ldots,n-k+1\}$, $B=\{n-k+2,\dots, n-1\}$ and $C=\{n\}$ in Theorem~\ref{cohbar}.1 to get
\begin{align}\label{reuso}
x_{n-k+2}&=(1,\ldots,n-k+1)[n-k+2,\dots, n]\nonumber\\
&=\sum_{i=1}^{n-k+1}(1,\ldots,\widehat{i},\ldots,n-k+1)[i,n-k+2,\dots, n-1](n).
\end{align}
As explained in Remark~\ref{cuandocero}, all terms in the summation in~(\ref{reuso}) vanish when multiplied by $x_{n-2k+3}=(1,\ldots,n-2k+2)[n-2k+3,\dots, n-k+1](n-k+2,\cdots,n)$. This yields $x_{n-2k+3} x_{n-k+2}=0$, while the equality $x_{n-2k+4}x_{n-k+2}=0$ follows directly from the considerations in Remark~\ref{cuandocero}. This proves item~\ref{rel1}.

Assume $n\geq2k-1$. Terms with $i\le n-k$ in the summation in~(\ref{reuso}) vanish when multiplied by $x_{n-2k+2}=(1,\ldots,n-2k+1)[n-2k+2,\dots, n-k](n-k+1,\dots,n)$. This yields $x_{n-2k+2}x_{n-k+2}=x_{n-2k+2}x_{n-k+1}$, proving item~\ref{rel2}.

Assume $n\geq2k$. Terms with $i<n-k$ in the summation in~(\ref{reuso}) vanish when multiplied by $x_{n-2k+1}=(1,\ldots,n-2k)[n-2k+1,\dots, n-k-1](n-k,\dots,n)$. This yields $x_{n-2k+1}x_{n-k+2}=x_{n-2k+1}x_{n-k+1}+x_{n-2k+1}x'_{n-k+1}$, proving item~\ref{rel3}.

Assume $n\ge r+3k-2$ and $r\ge1$. Take $A=\{1,\ldots,r+k-1\}$, $B=\{r+k,\dots, r+2k-3\}$ and $C=\{r+2k-2,\ldots,n\}$ in Theorem~\ref{cohbar}.1 to get
\begin{multline*}
\sum_{i=1}^{r+k-1}(1,\ldots,\widehat{i},\ldots,r+k-1)[i,r+k,\dots, r+2k-3](r+2k-2,\ldots,n)\\=\sum_{i=r+2k-2}^{n}(1,\ldots,r+k-1)[r+k,\dots, r+2k-3,i](r+2k-2,\ldots,\widehat{i},\ldots,n).
\end{multline*}
Terms with $i<r+k-1$ in the first summation vanish when multiplied by $x_{r}=(1,\ldots,r-1)[r,\ldots,r+k-2](r+k-1,\dots, n)$, and terms with $i>r+2k-2$ in the second summation vanish when multiplied by $x_{r+2k-1}=(1,\ldots,r+2k-2)[r+2k-1,\ldots,r+3k-3](r+3k-2,\dots, n)$. This yields the equality $x_r x_{r+k-1} x_{r+2k-1} = x_r x_{r+k} x_{r+2k-1}$, proving item~\ref{rel4}.

When $n\ge2k-1$, the previous argument applies for $r=2-k$ ---by vacuity in the case of the assertion about the first summation, whose only one term is~$x_1$. This yields $x_1x_{k+1}=x_2x_{k+1}$, proving item~\ref{rel0}.

Assume $n\ge r+3k-2$ and $r\ge1$. Take $A=\{1,\ldots,r+k\}$, $B=\{r+k+1,\dots, r+2k-2\}$ and $C=\{r+2k-1,\ldots,n\}$ in Theorem~\ref{cohbar}.1 to get
\begin{multline}
\sum_{i=1}^{r+k}(1,\ldots,\widehat{i},\ldots,r+k)[i,r+k+1,\dots,r+2k-2](r+2k-1,\ldots,n)\\=\sum_{i=r+2k-1}^{n}(1,\ldots,r+k)[r+k+1,\dots, r+2k-2,i](r+2k-1,\ldots,\widehat{i},\ldots,n).
\end{multline}
Terms with $i<r+k-1$ in the first summation vanish when multiplied by $x_r=(1,\ldots,r-1)[r,\dots, r+k-2](r+k-1,\ldots,n)$, while terms with $i>r+2k-1$ in the second summation vanish when multiplied by $x_{r+2k}=(1,\ldots,r+2k-1)[r+2k,\dots,r+3k-2](r+3k-1,\ldots,n)$. This yields the equality $x_r x'_{r+k} x_{r+2k}+x_r x_{r+k} x_{r+2k}=x_r x_{r+k+1} x_{r+2k}$, proving item~\ref{rel5}.
\end{proof}

\begin{proof}[Proof of part 1 in Theorem~\ref{detallado}]
By Remark~\ref{cuandocero},
\begin{align*}
y_{(j-1)k+1}y_{(j-1)k+2}={}&(x_{(j-1)k+1}\otimes1+1\otimes x_{(j-1)k+1})(x_{(j-1)k+2}\otimes1+1\otimes x_{(j-1)k+2})\\
{}={}&x_{(j-1)k+1}\otimes x_{(j-1)k+2}+x_{(j-1)k+2}\otimes x_{(j-1)k+1},
\end{align*}
so the product in~(\ref{productotote}) is
\begin{align}
\prod_{j=1}^iy_{(j-1)k+1}y_{(j-1)k+2}&{}=(x_1\otimes x_2+x_2\otimes x_1)(x_{k+1}\otimes x_{k+2}+x_{k+2}\otimes x_{k+1})\ \cdots\nonumber\\[-3pt]
&\hspace{2.5cm}\cdots\ (x_{(i-1)k+1}\otimes x_{(i-1)k+2}+x_{(i-1)k+2}\otimes x_{(i-1)k+1})\nonumber\\[4pt]
&{}=\sum_{\mbox{\scriptsize$\begin{array}{c}\epsilon_j\in\{1,2\} \\ 1\leq j\leq i\end{array}$}}\!\!\!\!\!x_{3-\epsilon_1}x_{k+3-\epsilon_2}\cdots x_{(i-1)k+3-\epsilon_i}\otimes x_{\epsilon_1}x_{k+\epsilon_2}\cdots x_{(i-1)k+\epsilon_i}.\label{adesarrollar}
\end{align}
The basis element we care about, namely
\begin{equation}\label{rastrear}
\prod_{j=1}^ix_{(j-1)k+1}\otimes\prod_{j=1}^ix_{(j-1)k+2},
\end{equation}
is the summand in~(\ref{adesarrollar}) with $\epsilon_j=2$ for all $j$.
The proof task is to argue that, when we expand the other terms of~(\ref{adesarrollar}) as sums of tensor of basis elements, the tensor~(\ref{rastrear}) does not appear.
This is obvious for the summand in~(\ref{adesarrollar}) with $\epsilon_j=1$ for all $j$. For all other summands, the assertion will be argued by focusing on the sequence of leaps associated to the subscripts of both tensor factors of each summand in~(\ref{adesarrollar}). Explicitly, the first leap in the subscripts of $x_{3-\epsilon_1}x_{k+3-\epsilon_2}\cdots x_{(i-1)k+3-\epsilon_i}$ is\hspace{.5mm} $k+3-\epsilon_2-(3-\epsilon_1)=k+\epsilon_1-\epsilon_2$, and the full sequences of leaps associated to
\begin{equation}\label{factores}
x_{3-\epsilon_1}x_{k+3-\epsilon_2}\cdots x_{k(i-1)+3-\epsilon_i}\quad\mbox{and}\quad x_{\epsilon_1}x_{k+\epsilon_2}\cdots x_{k(i-1)+\epsilon_i}
\end{equation}
are, respectively, 
\begin{equation}\label{seqlea}
(k+\epsilon_1-\epsilon_2, k+\epsilon_2-\epsilon_3,\ldots,k+\epsilon_{i-1}-\epsilon_i)\quad\mbox{and}\quad (k-\epsilon_1+\epsilon_2, k-\epsilon_2+\epsilon_3,\ldots,k-\epsilon_{i-1}+\epsilon_i).
\end{equation}
Such sequences of leaps clearly satisfy:
\begin{itemize}
\item[(A)] Leap values are either $k-1$, $k$, or $k+1$. Moreover, if all $k$-leaps are removed from either one of the sequences in~(\ref{seqlea}), then the resulting sequence of leaps either is empty or, else, has leap values that alternate between $k-1$ and $k+1$: ($k-1$,$k+1$,$k-1$,\ldots) or ($k+1$,$k-1$,$k+1$,\ldots).
\item[(B)] The two sequences of leaps in~(\ref{seqlea}) are coordinate-wise complementary to each other with respect to $2k$.
\item[(C)] The first leap different from $k$ (if any) in either of the sequences of leaps~(\ref{seqlea}) is a ($k+1$)-leap (($k-1$)-leap) provided the corresponding product in~(\ref{factores}) starts with $x_1$ ($x_2$).
\end{itemize}
Since the right tensor factor in~(\ref{rastrear}), i.e.~$\prod_{j=1}^ix_{(j-1)k+2}$, is a basic string preorder starting as $(1)[2,\dots,k]\cdots$, the proof is complete in view of Proposition~\ref{traduccion} below.
\end{proof}

\begin{proposition}\label{traduccion}
Any summand in~(\ref{adesarrollar}) whose associated sequences of leaps~(\ref{seqlea}) contain at least a $(k-1)$-leap (equivalently a $(k+1)$-leap) is a linear combination of tensor basis elements $u\otimes v$ where \emph{both} $u$ and $v$ are basic string preorders starting as $$[1,\dots,k-1](I_1)\cdots(I_{i-1})[J_i](I_i).$$
\end{proposition}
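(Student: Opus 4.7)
The plan is to argue the two tensor factors $u$ and $v$ of the summand independently, since the Baryshnikov basis for $H^*(\confkRn)^{\otimes 2}$ is the tensor product of bases. Let $w=x_{a_1}x_{a_2}\cdots x_{a_i}$ denote either factor. By~(A) the leap sequence $(\ell_j=a_{j+1}-a_j)$ takes values in $\{k-1,k,k+1\}$; by~(B), it contains a $(k-1)$-leap iff the other factor contains a $(k+1)$-leap; and by~(C), exactly one of $\{u,v\}$ has $a_1=1$ (with first non-$k$ leap $(k+1)$) and the other has $a_1=2$ (with first non-$k$ leap $(k-1)$). I will reduce the $a_1=2$ situation to $a_1=1$, and then eliminate all remaining non-$k$ leaps from the $a_1=1$ case.

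\textbf{Step 1: reduce $a_1=2$ to $a_1=1$.} Suppose $a_1=2$ and let $s$ be the position of the first $(k-1)$-leap. The first $s+1$ factors have indices $(2,k+2,\ldots,(s-1)k+2,sk+1)$ with leap pattern $(k,\ldots,k,k-1)$. Apply Lemma~\ref{auxilema}.\ref{rel4} $(s-1)$ times, each time to a triple of consecutive factors with leap pattern $(k,k-1)$, so that the $(k-1)$-leap is shifted one position leftward at each step. After the last iteration the first two factors become $x_2x_{k+1}$, which Lemma~\ref{auxilema}.\ref{rel0} replaces by $x_1x_{k+1}$. The rewritten $w$ now begins with $x_1$, has its first $s$ leaps equal to $k$, and its remaining leaps agree with the originals.

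\textbf{Step 2: eliminate $(k-1)$-leaps from $a_1=1$ products.} By~(A), the non-$k$ leaps alternate, and starting with $(k+1)$ they occur in the order $(k+1),(k-1),(k+1),(k-1),\ldots.$ Pair each $(k-1)$-leap with the immediately preceding $(k+1)$-leap. For each pair, apply Lemma~\ref{auxilema}.\ref{rel4} iteratively to shift the $(k-1)$-leap leftward through any intervening $k$-leaps until it sits adjacent to its $(k+1)$-partner; the resulting triple matches the left-hand side of Lemma~\ref{auxilema}.\ref{rel5}, which replaces it by the sum of two triples with leap pattern $(k,k)$ (one involving an $x'$-factor). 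This eliminates the pair without introducing any new non-$k$ leap. Iterate until no $(k-1)$-leap remains; the index bounds required by Lemma~\ref{auxilema} (at most $r+3k-2\le n$) follow from the hypothesis $ik\le n$.

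\textbf{Step 3: confirm basicness.} The result expresses $w$ as a sum of products $\tilde x_{b_1}\tilde x_{b_2}\cdots\tilde x_{b_i}$ with $b_1=1$, each $\tilde x$ equal to $x$ or $x'$, and remaining leaps in $\{k,k+1\}$ (the residual $(k+1)$-leap, if any, is the unpaired one). In each such product the set $\{1,\ldots,k-1\}$ lies inside the $[$-level of $\tilde x_1$ and inside the $()$-level of every subsequent factor, so the transitive closure always has $\{1,\ldots,k-1\}$ as its initial $[$-bracket, strictly below the element $k$. Using the composition rule~(\ref{pdeee})---and equation~(\ref{mas1}) in the case where all remaining leaps equal $k$---one verifies directly that the transitive closure is basic and of the required form $[1,\ldots,k-1](I_1)\cdots(I_{i-1})[J_i](I_i)$. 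The main obstacle is the careful bookkeeping: tracking how $x'$-factors arise and propagate through further applications of Lemma~\ref{auxilema}, handling the case of a residual unpaired $(k+1)$-leap (which arises when the two non-$k$-leap counts of $w$ coincide), and checking the basicness condition $\max(J_r\cup I_r)\in I_r$ at every $[$-level in each final product.
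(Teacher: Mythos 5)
Your argument is correct and takes essentially the same route as the paper's own proof: items \ref{rel0} and \ref{rel4} of Lemma~\ref{auxilema} convert the $x_2$-leading factor into an $x_1$-leading one, and items \ref{rel4} and \ref{rel5} slide each $(k-1)$-leap leftward to meet its preceding $(k+1)$-leap and resolve the adjacent pair into $k$-leaps (introducing the $x'$-terms), leaving at most one residual $(k+1)$-leap and hence sums of basis elements of the stated form. The only slip is the parenthetical in your Step 3: the unpaired $(k+1)$-leap survives exactly when the number of $(k+1)$-leaps exceeds the number of $(k-1)$-leaps by one (i.e., when the total number of non-$k$ leaps is odd), not when the two counts coincide.
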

\begin{proof}
Take a product $p=x_{k_1}x_{k_2}\cdots x_{k_i}$ in~(\ref{factores}), so $k_1\in\{1,2\}$, with associated sequence of leaps $(\ell_1,\ldots,\ell_{i-1})$ satisfying conditions (A)--(C) above, and so that not all leap values $\ell_j$ are $k$.

\noindent {\bf Case $k_1=1$:} $p$ has the form
\begin{equation}\label{forma1}
x_1\cdots\underbrace{x_{kr_1+1}x_{k(r_1+1)+2}}_{\text{($k+1$)-leap}}\cdots\underbrace{x_{kr_2+2}x_{k(r_2+1)+1}}_{\text{($k-1$)-leap}}\cdots\underbrace{x_{kr_3+1}x_{k(r_3+1)+2}}_{\text{($k+1$)-leap}}\cdots\underbrace{x_{kr_4+2}x_{k(r_4+1)+1}}_{\text{($k-1$)-leap}}\cdots,
\end{equation}
where we only indicate ($k-1$)-leaps and ($k+1$)-leaps. Items~\ref{rel4} and~\ref{rel5} in Lemma~\ref{auxilema} allow us to replace each portion $x_{kr_j+1}x_{k(r_j+1)+2}\cdots x_{kr_{j+1}+2}x_{k(r_{j+1}+1)+1}$, having an initial ($k+1$)-leap, a final ($k-1$)-leap, and (perhaps) some intermediate $k$-leaps, by $$x_{kr_j+1}(x_{k(r_j+1)+1}+x'_{k(r_j+1)+1})x_{k(r_j+2)+1}\cdots x_{kr_{j+1}+1}x_{k(r_{j+1}+1)+1},$$
which only has $k$-leaps. The replacing process can be iterated since the initial and final terms in the replacing portion agree with those in the replaced portion. After all replacements are made, and sums are distributed, $p$ becomes a sum of expressions each of which is similar to the original one~(\ref{forma1}), except that some of the initial $x_{kj+1}$'s get replaced by the corresponding $x'_{kj+1}$, and in such a way that no ($k-1$)-leaps show up, and at most one ($k+1$)-leap shows up. But any such expression is a basis element of the required form (the latter assertion uses the hypothesis $ik+1\leq n$ in part 1 of Theorem~\ref{detallado} ---see Remark~\ref{aclarando} below).

\noindent {\bf Case $k_1=2$:} $p$ has the form
\begin{equation*}
x_2\cdots\underbrace{x_{kr_1+2}x_{k(r_1+1)+1}}_{\text{($k-1$)-leap}}\cdots\underbrace{x_{kr_2+1}x_{k(r_2+1)+2}}_{\text{($k+1$)-leap}}\cdots\underbrace{x_{kr_3+2}x_{k(r_3+1)+1}}_{\text{($k-1$)-leap}}\cdots\underbrace{x_{kr_4+1}x_{k(r_4+1)+2}}_{\text{($k+1$)-leap}}\cdots,
\end{equation*}
Items~\ref{rel0} and~\ref{rel4} in Lemma~\ref{auxilema} allow us to replace the initial portion $x_2\cdots x_{kr_1+2}x_{k(r_1+1)+1}$ by $x_1\cdots x_{kr_1+1}x_{k(r_1+1)+1}$. Then, the replacement process described in the previous case allows us to write $p$ as a sum of basis elements of the required form.
\end{proof}

\begin{remark}\label{aclarando}{\em
Part 2 in Theorem~\ref{detallado} will be proved using an argument similar to that in the previous proof, except that it will be necessary to deal first with an additional subtlety. Namely, note that when $ik=n$, we have
\begin{align*}
x_{(i-1)k+2}={}&\left(\rule{0mm}{3.5mm}1,\cdots,(i-1)k+1\right)\left[\rule{0mm}{3.5mm}(i-1)k+2,\cdots,ik\right]\left(\rule{0mm}{3.5mm}ik+1,\cdots,n\right)\\
{}={}&\left(\rule{0mm}{3.5mm}1,\cdots,(i-1)k+1\right)\left[\rule{0mm}{3.5mm}(i-1)k+2,\cdots,n\right],
\end{align*}
which is an elementary \emph{non-basic} element (i.e., under the main hypothesis in part~2 of Theorem~\ref{detallado}). So, when analyzing a typical tensor factor $x_{\epsilon_1}x_{k+\epsilon_2}\cdots x_{(i-1)k+\epsilon_i}$ in~(\ref{adesarrollar}) with $\epsilon_i=2$, the recursive process described in the previous proof will not end up producing sums of basis elements. This issue will be resolved using item~\ref{rel3} in Lemma~\ref{auxilema}.}\end{remark}

Let us go back to the starting point for the proof of part 2 in Theorem~\ref{detallado}, i.e., the expression in~(\ref{adesarrollar}) for the product $\prod_{j=1}^iy_{(j-1)k+1}y_{(j-1)k+2}$. As observed in Remark~\ref{aclarando}, we no longer work \red{with} the basis element indicated in part 1 of Theorem~\ref{detallado}. Instead, the basis element we now care about is $p_{i,1}\otimes p_{i,2}$, where $ki=n$, and which arises from one of the two summands in~(\ref{adesarrollar}) for which the values of the indices $\epsilon_j$ alternate between 1 and 2. 

In order to simplify the argument, it is convenient to note that all $y_j$, and therefore their product $\prod_{j=1}^iy_{(j-1)k+1}y_{(j-1)k+2}$, are invariant under the involution induced by the map that switches coordinates in $\confkRn\times\confkRn$. We show the following (equivalent, by the symmetry just noted, but slightly simpler-to-prove) version of part 2 in Theorem~\ref{detallado}:

\begin{theorem}
For $i\ge2$, $k\ge3$ and $n=ki$, both $p_{i,1}\otimes p_{i,2}$ and $p_{i,2}\otimes p_{i,1}$ are used in the expression of the product~(\ref{productotote}) as a linear combination of Baryshnikov tensor basis elements for $H^*(\confkRn)^{\otimes2}$.
\end{theorem}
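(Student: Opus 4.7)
The approach mirrors the proof of Proposition~\ref{traduccion} (that of part~1 of Theorem~\ref{detallado}), now incorporating the subtlety announced in Remark~\ref{aclarando}: since $n=ik$, the element $x_{(i-1)k+2}$ is elementary but not basic, so the iterative reductions of Proposition~\ref{traduccion} must be augmented by an additional terminal step provided by item~\ref{rel3} of Lemma~\ref{auxilema}. First I would expand~(\ref{productotote}) as the $2^i$-term sum~(\ref{adesarrollar}), indexed by $\epsilon=(\epsilon_1,\ldots,\epsilon_i)\in\{1,2\}^i$, with summand $L_\epsilon\otimes R_\epsilon$ whose factors are $L_\epsilon=x_{3-\epsilon_1}\cdots x_{(i-1)k+3-\epsilon_i}$ and $R_\epsilon=x_{\epsilon_1}\cdots x_{(i-1)k+\epsilon_i}$. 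The task is then to rewrite each $L_\epsilon\otimes R_\epsilon$ as a linear combination of Baryshnikov tensor basis elements and to track the coefficient of $p_{i,1}\otimes p_{i,2}$.

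The next step is to identify the single summand that produces $p_{i,1}\otimes p_{i,2}$. The element $p_{i,2}$ carries factors $x'_{(2j-1)k+1}$ for each $j=1,\ldots,\lfloor i/2\rfloor$, and each such $x'$ can only be introduced from $R_\epsilon$ via item~\ref{rel5} of Lemma~\ref{auxilema} (for interior positions) or via item~\ref{rel3} (available at the terminal position only when $i$ is even). Both items demand very specific subscript triples (or, at the end, a specific pair), and combining these constraints forces the right-factor subscript sequence to be $(1,k+2,2k+1,3k+2,\ldots)$. This pins $\epsilon$ down uniquely to the alternating pattern $\epsilon^*=(1,2,1,2,\ldots)$; a parallel check on $p_{i,1}$ confirms that $\epsilon^*$ is consistent with the required $x'$-pattern (at subscripts $2jk+1$) in $L_{\epsilon^*}$.

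For this $\epsilon^*$, the reductions proceed iteratively: items~\ref{rel4} and~\ref{rel5} handle the interior $(k+1,k-1)$-leap pairs (each application of item~\ref{rel5} introducing the required $x'$ at the middle position); item~\ref{rel0} converts the leading $x_2$ of $L_{\epsilon^*}$ into $x_1$; and item~\ref{rel3} converts the trailing $(k+1)$-leap, present precisely when $i$ is even, into an all-$k$-leap product plus a parallel product carrying the extra $x'_{(i-1)k+1}$. An explicit bookkeeping, modelled on the reduction scheme used in the proof of Proposition~\ref{traduccion}, then shows that $p_{i,1}$ appears with coefficient $1$ mod~$2$ in the expansion of $L_{\epsilon^*}$ and $p_{i,2}$ appears with coefficient $1$ mod~$2$ in the expansion of $R_{\epsilon^*}$. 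The appearance of $p_{i,2}\otimes p_{i,1}$ then follows from the $\mathbb{Z}/2$-invariance of~(\ref{productotote}) under swap of tensor factors, via the complementary summand indexed by $(2,1,2,1,\ldots)$.

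The principal obstacle is the uniqueness step: ruling out that some $\epsilon\neq\epsilon^*$ contributes an additional (and therefore potentially cancelling, mod~$2$) copy of $p_{i,1}\otimes p_{i,2}$. Since a single $\epsilon$-reduction can produce many basis elements, one needs a systematic argument that the prescribed $x'$-patterns of $p_{i,1}$ and $p_{i,2}$ can be reproduced \emph{only} by the specific triples and terminal pair outlined above, which in turn force $\epsilon=\epsilon^*$. This careful case analysis, hinging on the tight constraints imposed by items~\ref{rel3} and~\ref{rel5} of Lemma~\ref{auxilema}, is the main technical hurdle.
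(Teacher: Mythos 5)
Your overall strategy coincides with the paper's: expand~(\ref{productotote}) as in~(\ref{adesarrollar}), single out the alternating summand $\epsilon^*=(1,2,1,2,\ldots)$, reduce its two tensor factors by the replacing process of Proposition~\ref{traduccion} (items~\ref{rel0}, \ref{rel4}, \ref{rel5} of Lemma~\ref{auxilema}) together with a terminal application of item~\ref{rel3} forced by $n=ik$ --- exactly the subtlety of Remark~\ref{aclarando} --- and obtain $p_{i,2}\otimes p_{i,1}$ from the complementary summand via the switch-of-factors symmetry. All of this is correct and is what the paper does (the paper writes out the even case $i=2a$ and leaves the odd case to the reader).

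The difficulty is that you explicitly defer the step that carries essentially all of the content: showing that no summand $L_\epsilon\otimes R_\epsilon$ with $\epsilon\notin\{\epsilon^*,\,\overline{\epsilon^*}\}$ contributes $p_{i,1}\otimes p_{i,2}$, so that the coefficient you compute cannot cancel mod~$2$. You name this ``the main technical hurdle'' and only assert that the $x'$-patterns of $p_{i,1}$ and $p_{i,2}$ \emph{force} $\epsilon=\epsilon^*$; as written this is the statement to be proved, not a proof of it. The required argument is genuinely delicate because a single reduction of $L_\epsilon$ or $R_\epsilon$ produces many basis elements, and the primes introduced by item~\ref{rel5} come with both primed and unprimed companions. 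The paper resolves this by a case analysis on the pair $(\lambda,\rho)$ of final leaps of the two factors (with $\lambda+\rho=2k$): when $\lambda=\rho=k$, one factor ends in two consecutive subscripts congruent to $1$ modulo $k$ and the replacing process shows it can yield neither $p_{i,1}$ nor $p_{i,2}$; when $(\lambda,\rho)=(k-1,k+1)$, the right factor (after item~\ref{rel3}) yields $p_{i,2}$ only if its full leap sequence is the second one in~(\ref{associatedleaps}), i.e.\ only for $\epsilon=\epsilon^*$; and when $(\lambda,\rho)=(k+1,k-1)$, the left factor reduces, after a final use of item~\ref{rel0}, to a basis element distinct from both $p_{i,1}$ and $p_{i,2}$. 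Your proposal is a faithful outline of this proof, but until an exclusion argument of this kind is actually supplied, it has a genuine gap at its crux.
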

\begin{proof}
We provide full proof details when $i=2a$ is even; the parallel argument for $i$ odd is left as an exercise for the reader. In order to simplify notation, we let $r_1\pt r_2\cdots r_t$ and $r_1\pt r_2\cdots r_t | s_1\pt s_2\cdots s_t$ stand for $x_{r_1}x_{r_2}\cdots x_{r_t}$ and $x_{r_1}x_{r_2}\cdots x_{r_t}\otimes x_{s_1}x_{s_2}\cdots x_{s_t}$, respectively. With this notation,~(\ref{adesarrollar}) becomes
\small\begin{align}
&{}\left(1|2+2|1\rule{0mm}{4mm}\right)\left(\rule{0mm}{4mm}(k+1)|(k+2)+(k+2)|(k+1)\right)\ \cdots \nonumber\\
& \hspace{3cm}\cdots\ \left(((2a-1)k+1)|((2a-1)k+2)+((2a-1)k+2)|((2a-1)k+1)\rule{0mm}{4mm}\right)\nonumber\\
&{}=\hspace{-0.3cm}\sum_{\mbox{\scriptsize$\begin{array}{c}\epsilon_j\in\{1,2\} \\ 1\leq j\leq i\end{array}$}}\!\!\!\!\!(3-\epsilon_1)(k+3-\epsilon_2)\cdots((2a-1)k+3-\epsilon_{2a})\left|\rule{0mm}{4mm}\right.(\epsilon_1)(k+\epsilon_2)\cdots((2a-1)k+\epsilon_{2a}) .\label{adesarrollarbis}
\end{align}
The summand with $(\epsilon_1,\epsilon_2,\cdots,\epsilon_{2a})=(1,2,1,\ldots,2)$ is
\begin{multline}\label{f1}
2\pt (k+1)\pt (2k+2)\pt (3k+1)\cdots((2a-2)k+2)\pt((2a-1)k+1)\\\left|\rule{0mm}{4mm}\right.1\pt (k+2)\pt (2k+1)\pt (3k+2)\cdots((2a-2)k+1)\pt((2a-1)k+2),
\end{multline}
whose associated sequences of leaps are
\begin{equation}\label{associatedleaps}
(k-1,k+1,k-1,\ldots,k-1)\quad\mbox{and}\quad(k+1,k-1,k+1,\ldots,k+1).
\end{equation}
Using the replacing process explained in the previous proof, it is clear that the expression of $$2\pt (k+1)\pt (2k+2)\pt (3k+1)\cdots((2a-2)k+2)\pt((2a-1)k+1)$$ in terms of Baryshnikov basis elements uses $p_{2a,1}$, but not $p_{2a,2}$. Likewise, the replacing process and item~\ref{rel3} in Lemma~\ref{auxilema} imply that the expression of $$1\pt (k+2)\pt (2k+1)\pt (3k+2)\cdots((2a-2)k+1)\pt((2a-1)k+2)$$ in terms of Baryshnikov basis uses $p_{2a,2}$. Therefore the expression of~(\ref{f1}) in terms of Baryshnikov (tensor) basis elements uses $p_{2a,1}\otimes p_{2a,2}$ without using $p_{2a,2}\otimes p_{2a,1}$. Further, the symmetry coming from the involution induced by the switching map on $\confkRn^{\times2}$ implies that the expression in terms of Baryshnikov basis of the summand in~(\ref{adesarrollarbis}) with $(\epsilon_1,\epsilon_2,\cdots,\epsilon_{2a})=(2,1,2\ldots,1)$ uses $p_{2a,2}\otimes p_{2a,1}$ without using $p_{2a,1}\otimes p_{2a,2}$.

It remains to prove that neither $p_{2a,1}\otimes p_{2a,2}$ nor $p_{2a,2}\otimes p_{2a,1}$ are used in the expression in terms of basis elements of any summand in~(\ref{adesarrollarbis}) whose associated sequences of leaps is different from those in~(\ref{associatedleaps}). By symmetry, it suffices to consider the case of a summand 
\begin{equation}\label{suffices}
(3-\epsilon_1)(k+3-\epsilon_2)\cdots((2a-1)k+3-\epsilon_{2a})\left|\rule{0mm}{4mm}\right.(\epsilon_1)(k+\epsilon_2)\cdots((2a-1)k+\epsilon_{2a})
\end{equation}
with $\epsilon_1=1$. Let $\lambda\in\{k-1,k,k+1\}$ ($\rho\in\{k+1,k,k-1\}$) stand for the value of the last leap in the tensor factor on the left (right) of~(\ref{suffices}). Recall $\lambda+\rho=2k$.

\noindent {\bf Case $\lambda=\rho=k$:} The ending portion of one of the two tensor factors in~(\ref{suffices}) is forced to be $$\cdots\!((2a-2)k+1)\pt((2a-1)k+1).$$ The replacing process shows that such a factor cannot give rise to $p_{2a,1}$ or $p_{2a,2}$ in its expression in terms of Baryshnikov basis.

\noindent {\bf Case $(\lambda,\rho)=(k-1,k+1)$:} The equalities $\epsilon_{2a-1}=1$ and $\epsilon_{2a}=2$ are now forced. Letting $j'$ stand for $x'_j$, and ignoring Baryshnikov basis elements different from $p_{2a,1}$ and $p_{2a,2}$, the right factor in~(\ref{suffices}) then becomes
$$
1\pt(k+\epsilon_2)\cdots((2a-2)k+1)((2a-1)k+2)=1\pt(k+\epsilon_2)\cdots((2a-2)k+1)((2a-1)k+1)',
$$
in view of the replacing process and item~\ref{rel3} in Lemma~\ref{auxilema}. Further, the replacing process makes it clear that the expression of the latter element in terms of Baryshnikov basis elements does not use $p_{2a,1}$, and that it uses $p_{2a,2}$ only if the sequence of leaps associated to the right tensor factor in~(\ref{suffices}) is the second sequence in~(\ref{associatedleaps}). 

\noindent {\bf Case $(\lambda,\rho)=(k+1,k-1)$:} The equalities $\epsilon_{2a-1}=2$ and $\epsilon_{2a}=1$ are now forced. Ignoring Baryshnikov basis elements different from $p_{2a,1}$ and $p_{2a,2}$, the left factor in~(\ref{suffices}) becomes
\begin{align*}
2\pt(k+3-\epsilon_2)\cdots((2a-2)k+1)&((2a-1)k+2) \\&=2\pt(k+3-\epsilon_2)\cdots((2a-2)k+1)((2a-1)k+1)',
\end{align*}
where the latter expression further evolves under the replacing process (still ignoring Baryshnikov basis elements different from $p_{2a,1}$ and $p_{2a,2}$) to either zero or to 
\begin{equation}\label{yacasisitito}
2\pt (k+1) \pt (2k+1) \pt (3k+1)'\cdots((2a-2)k+1)((2a-1)k+1)'.
\end{equation}
Note the factor ``$(k+1)$'', rather than a (primed) ``$(k+1)'\hspace{.2mm}$'', due to the initial ``$2$'' in~(\ref{yacasisitito}). In any case, a final application of item~\ref{rel0} in Lemma~\ref{auxilema} shows that~(\ref{yacasisitito}) vanishes modulo Baryshnikov basis elements different from $p_{2a,1}$ and $p_{2a,2}$.
\end{proof}

\section{The higher topological complexity of $\confkRn$}
We can now easily deduce the value of the higher topological complexity $\TC_s(\confkRn)$, for any $s>2$.
\begin{corollary}\label{lassecuenciales} For $s>2$, 
\[
\TC_s(\confkRn) = \begin{cases}
					0, & n < k; \\
					s-1,& n=k \mbox{ with $k$ odd;} \\
					s, & n=k  \mbox{ with $k$ even;}\\
					s\lfloor n/k\rfloor, & n>k.
				\end{cases}
\]
\begin{proof}
The case $n\leq k$ is trivial. For $n>k$ and $s>2$, Lemma~\ref{dimension} \red{and~\cite[Theorem~3.9]{bgrt}} imply the estimate $\TC_{\red{s}}(\confkRn) \leq s\lfloor n/k\rfloor$. From~\cite[Definition~3.8 and Theorem~3.9]{bgrt}, equality will follow once we exhibit a non-zero product of $s\lfloor n/k\rfloor$ ``$s$-th zero-divisors'' for $\confkRn$, i.e., of elements in the kernel of the iterated cup product $H^*(\confkRn)^{\otimes s}\to H^*(\confkRn)$.

Let $i=\left\lfloor n/k\right\rfloor$, $q\in\{1,\ldots,s-1\}$, and consider the $s$-th zero-divisors
$$
z_{m,q} = 1\otimes \cdots \otimes 1\otimes \underbrace{x_m}_{q\text{-th}} \otimes 1 \otimes  \cdots \otimes 1 + 1\otimes \cdots \otimes 1 \otimes x_m\in H^*(\confkRn)^{\otimes s},
$$
whenever $m+k\le n+2$. For instance
$$\prod_{j=1}^i z_{(j-1)k+1,s-1}z_{(j-1)k+2,s-1} = 1\otimes \cdots \otimes 1\otimes \prod_{j=1}^i y_{(j-1)k+1} \cdot y_{(j-1)k+2}$$ and, for $q\le s-2$,
\begin{align*}
z_{m, q}\prod_{j=1}^i z_{(j-1)k+1,s-1}&z_{(j-1)k+2,s-1}\\ &{}= 1\otimes \cdots \otimes 1 \otimes \underbrace{x_m}_{q-\text{th}} \otimes 1 \otimes \cdots \otimes 1 \otimes \prod_{j=1}^i y_{(j-1)k+1} \cdot y_{(j-1)k+2}\\&\ {}+1\otimes \cdots\otimes1\otimes\left((1\otimes x_m)\cdot \prod_{j=1}^i y_{(j-1)k+1}y_{(j-1)k+2}\right).
\end{align*}
The second summand in the latter expression vanishes in view of Lemma~\ref{dimension} (by dimensional considerations or, alternatively, by $\cat$-considerations). Consequently
\begin{align*}
\prod_{j=1}^{i} z_{(j-1)k+1,1} \:\cdot & \prod_{j=1}^{i} z_{(j-1)k+1,2}\: \cdots \prod_{j=1}^{i}  z_{(j-1)k+1,s-2} \:\cdot \prod_{j=1}^{i} z_{(j-1)k+1,s-1} z_{(j-1)k+2,s-1} \\
&= \left(\,\prod_{j=1}^i x_{(j-1)k+1}\right) \otimes \cdots \otimes \left(\,\prod_{j=1}^i x_{(j-1)k+1}\right) \otimes \prod_{j=1}^i y_{(j-1)k+1}y_{(j-1)k+2},
\end{align*}
which is non-zero because the first $s-2$ tensor factors in the latter expression are Baryshnikov basis elements, whereas the last tensor factor is non-zero by Theorem \ref{detallado}.
\end{proof}
\end{corollary}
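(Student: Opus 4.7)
The plan is to split into the cases matching the formula. When $n<k$, the space $\confkRn=\mathbb{R}^n$ is contractible, so $\TC_s=0$ is automatic. When $n=k$, the deformation $\F_k(\mathbb{R},k)\simeq S^{k-2}$ noted at the start of Section~\ref{secciondetcf} reduces matters to the classical formulas $\TC_s(S^d)=s$ for $d$ even and $\TC_s(S^d)=s-1$ for $d$ odd, which give exactly the asserted values in the two $n=k$ subcases. The nontrivial range is $n>k$, where I would combine the general inequality $\TC_s(X)\leq s\cdot\cat(X)$ with $\cat(\confkRn)=\lfloor n/k\rfloor$ from Lemma~\ref{dimension} to obtain the upper bound $\TC_s(\confkRn)\leq s\lfloor n/k\rfloor$.

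For the matching lower bound, I would use the $s$-th zero-divisor cup-length, exhibiting a non-zero product of $s\lfloor n/k\rfloor$ elements in the kernel of the iterated cup product $H^*(\confkRn)^{\otimes s}\to H^*(\confkRn)$. Writing $i=\lfloor n/k\rfloor$ and, for $q\in\{1,\ldots,s-1\}$,
$$z_{m,q}=1^{\otimes(q-1)}\otimes x_m\otimes 1^{\otimes(s-q)}+1^{\otimes(s-1)}\otimes x_m,$$
which is an $s$-th zero-divisor whenever $m+k\leq n+2$, the natural candidate is
$$\left(\prod_{q=1}^{s-2}\prod_{j=1}^{i} z_{(j-1)k+1,\,q}\right)\cdot\prod_{j=1}^{i} z_{(j-1)k+1,\,s-1}\,z_{(j-1)k+2,\,s-1},$$
a product of $si=s\lfloor n/k\rfloor$ factors. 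The rightmost block collapses to $1^{\otimes(s-2)}\otimes\prod_{j=1}^{i}y_{(j-1)k+1}y_{(j-1)k+2}$, whose non-vanishing is exactly Theorem~\ref{detallado}.

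The main obstacle is the bookkeeping for the first $s-2$ blocks, since each $z_{m,q}$ distributes $x_m$ between coordinate $q$ and coordinate $s$, and the ``coordinate $s$'' option produces unwanted cross-terms. Here I would lean on the sharp equality $\hdim(\confkRn)=i(k-2)$ from Lemma~\ref{dimension}: every surviving summand of $\prod_{j=1}^{i}y_{(j-1)k+1}y_{(j-1)k+2}$ must have both tensor factors in top degree $i(k-2)$, for otherwise one factor would exceed $\hdim$. Consequently, any further multiplication of the last tensor slot by an $x_m$ lifts that slot past $\hdim$ and kills the summand, leaving only the terms in which all additional $x$'s accumulate in coordinates $1,\ldots,s-2$. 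What remains is $\left(\prod_{j=1}^{i} x_{(j-1)k+1}\right)^{\otimes(s-2)}\otimes\prod_{j=1}^{i} y_{(j-1)k+1}y_{(j-1)k+2}$, a Baryshnikov basis element in the first $s-2$ coordinates tensored with a non-zero class in the last two by Theorem~\ref{detallado}. This establishes $s\lfloor n/k\rfloor\leq\TC_s(\confkRn)$ and closes the argument.
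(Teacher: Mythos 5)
Your proposal is correct and follows essentially the same route as the paper: the same upper bound $\TC_s\leq s\cdot\cat$, the same $s$-th zero-divisors $z_{m,q}$, and the same product collapsing to $\bigl(\prod_{j}x_{(j-1)k+1}\bigr)^{\otimes(s-2)}\otimes\prod_{j}y_{(j-1)k+1}y_{(j-1)k+2}$, with Theorem~\ref{detallado} supplying the non-vanishing. Your explicit degree count showing that both tensor slots of $\prod_j y_{(j-1)k+1}y_{(j-1)k+2}$ sit in top degree $i(k-2)$ is exactly the ``dimensional considerations'' the paper invokes to kill the cross-terms, and your treatment of $n=k$ via $\TC_s(S^{k-2})$ fills in what the paper dismisses as trivial.
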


\section{Motion planners for pure planar braids with few strands}\label{seccionhn}
In a recent work~(\cite{Barda}), Bardakov, Singh and Vesnin have proved:
\begin{enumerate}[(i)]
\item\label{unito} $\ppn$ is free of rank $(1,7)$ for $n=(3,4)$;
\item\label{docito} $\ppn$ is not free for $n\geq6$,
\end{enumerate}
and have conjectured: 
\begin{enumerate}[(i)]\addtocounter{enumi}{2}
\item\label{trecito} $\pp_5$ is a free group of rank 31.
\end{enumerate}
The proof of~(\ref{unito}) occupies a full section in~\cite{Barda}. In fact, the authors of that paper offer two different proofs of the freeness of $\pp_4$, one with a geometric flavor and another one with an algebraic flavor. The algebraic proof is technical, whereas the geometric proof is extensive. In this section we give short elementary arguments for both~(\ref{unito}) and (\ref{docito}), as well as a short argument proving a stronger form (Proposition~\ref{weker-bjorner-severs-white} below) of the conjectured~(\ref{trecito}). In addition, we indicate a way to construct an explicit optimal motion planner for $\ppn$ when $n$ is small.

Under this paper's perspective, the simplest case is that of~(\ref{docito}), which is an immediate consequence of Corollary~\ref{tprincipal} and the well-known fact that the topological complexity of a free group is at most 2. Even easier is the case $n=3$ in~(\ref{unito}). Indeed, as observed at the beginning of Section~\ref{secciondetcf}, $\F_k(\mathbb{R},n)$ is either contractible or has the homotopy type of the sphere $S^{k-2}$ for, respectively, $n<k$ or $n=k$. In particular $\pp_1$ and $\pp_2$ are trivial, while (and relevant for~(\ref{unito})) $\pp_3$ is an infinite cyclic group.

Condition~(\ref{trecito}) is a special case of:
\begin{proposition}\label{weker-bjorner-severs-white}
For $3\le k<n<2k$, $\F_k(\mathbb{R},n)$ has the homotopy type of a wedge of $\beta(k,n)$ spheres of dimension $k-2$, where $$\beta(k,n)=\Sigma^n_{i=k} \genfrac(){0pt}{1}{n}{i} \genfrac(){0pt}{1}{i-1}{k-1}.$$
\end{proposition}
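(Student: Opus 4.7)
The plan is to combine Lemma~\ref{dimension} with the general principle that a sufficiently connected CW complex of low dimension is a wedge of spheres. For $k < n < 2k$ we have $\lfloor n/k\rfloor = 1$, so Lemma~\ref{dimension} provides two crucial facts at once: $\F_k(\mathbb{R},n)$ is $(k-3)$-connected, and it has the homotopy type of a CW complex of dimension exactly $k-2$. These two bounds squeeze the cohomology into degrees $0$ and $k-2$, which is precisely the cohomological signature of a wedge of $(k-2)$-spheres.

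Next, I would upgrade this cohomological statement to a genuine homotopy equivalence. For $k\geq 4$ the space is simply connected, so Hurewicz gives $\pi_{k-2}(\F_k(\mathbb{R},n)) \cong H_{k-2}(\F_k(\mathbb{R},n);\mathbb{Z})$; the latter group is free abelian because $H_{k-2}$ of a $(k-2)$-dimensional CW complex sits inside the top cellular chain group. Choosing a $\mathbb{Z}$-basis $\{\alpha_j\}$ of $H_{k-2}$ and representing each $\alpha_j$ by a map $S^{k-2}\to \F_k(\mathbb{R},n)$, the induced map $\bigvee S^{k-2}\to \F_k(\mathbb{R},n)$ is an integral homology isomorphism between simply connected CW complexes, hence a homotopy equivalence by Whitehead's theorem. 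For the remaining case $k=3$, the space is a connected $1$-dimensional CW complex, so it deformation-retracts onto a wedge of circles by collapsing a spanning tree.

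To pin down the number of wedge summands, I would compute $\dim_{\mathbb{Z}_2}H^{k-2}(\F_k(\mathbb{R},n);\mathbb{Z}_2)$; since $H_{k-3}=0$ and $H_{k-2}$ is torsion-free, the universal coefficient theorem guarantees this $\mathbb{Z}_2$-dimension equals the $\mathbb{Z}$-rank of $H_{k-2}$, hence the number of spheres. By Theorem~\ref{basbar}, a $\mathbb{Z}_2$-basis of $H^{k-2}$ is indexed by the elementary basic preorders, i.e.~strings $(I_0)[J_1](I_1)$ with $|J_1|=k-1$ and $\max(J_1\cup I_1)\in I_1$ (in particular $I_1\neq\varnothing$).

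The final step is the combinatorial identification of this count with $\beta(k,n)$. Parametrize such a preorder by $S = J_1\cup I_1 = [n]\setminus I_0$, and let $i=|S|$. The constraint $I_1\neq\varnothing$ forces $i\geq k$, while $i\leq n$. For each fixed $i$ there are $\binom{n}{i}$ choices of $S$, and then $\binom{i-1}{k-1}$ choices of $J_1\subseteq S\setminus\{\max S\}$ (the complement $I_1 = S\setminus J_1$ automatically contains $\max S$ and is nonempty). Summing over $i$ yields $\beta(k,n)$. No step is a serious obstacle here: the work is already done by Lemma~\ref{dimension} and Theorem~\ref{basbar}, and the only thing one really has to verify is that the combinatorial parameter $i$ correctly tracks the size of $S=J_1\cup I_1$ in the basic-preorder count.
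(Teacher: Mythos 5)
Your argument is correct, and it reaches the conclusion by a route that differs from the paper's in both of its steps. The paper's proof is essentially a two-line citation: Severs--White's result that $\F_k(\mathbb{R},n)$ admits a \emph{minimal} cellular model immediately forces that model to have one $0$-cell and cells otherwise only in dimension $k-2$ (since homology is concentrated there when $\lfloor n/k\rfloor=1$), hence to be a wedge of $(k-2)$-spheres, and the count of spheres is then quoted from Bj\"orner--Welker's Betti number formula. You instead use only the weaker inputs already packaged in Lemma~\ref{dimension} --- $(k-3)$-connectivity and $\hdim=k-2$ --- and recover the wedge decomposition by the standard Hurewicz/Whitehead argument (with the spanning-tree case for $k=3$, where simple connectivity fails); you then count the spheres by enumerating the degree-$(k-2)$ Baryshnikov basic preorders $(I_0)[J_1](I_1)$ directly, and your bijective bookkeeping via $S=J_1\cup I_1$, $i=|S|\ge k$, giving $\binom{n}{i}\binom{i-1}{k-1}$, is right (it checks out against the ranks $7$ and $31$ for $\pp_4$ and $\pp_5$). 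What your approach buys is independence from the minimality of the Severs--White model and from the Bj\"orner--Welker formula, at the cost of a slightly longer homotopy-theoretic argument; what the paper's approach buys is brevity, since minimality makes the wedge structure immediate without any Hurewicz or Whitehead input. One small point of care in your write-up: the universal coefficient step identifying the $\mathbb{Z}_2$-dimension of $H^{k-2}$ with the $\mathbb{Z}$-rank of $H_{k-2}$ is needed precisely because Theorem~\ref{basbar} is stated with $\mathbb{Z}_2$ coefficients, and you handle it correctly by noting $H_{k-3}=0$ and freeness of $H_{k-2}$.
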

\begin{proof}
Severs-White have shown in~\cite[Theorem~1.1]{MR2915650} that $\confkRn$ admits a minimal cellular model, i.e., it has the homotopy type of a cell complex having as many cells in each dimension~$d$ as the rank of the homology (free abelian) group $H^d(\confkRn)$. The result then follows from Theorem~\ref{basbar} and \cite[Theorem~1.1(c)]{MR1317619}.
\end{proof}

We next give an elementary geometric argument leading to a proof of~(\ref{unito}) and~(\ref{trecito}), as well as to a description of explicit motion planners for the corresponding groups $\ppn$. Consider the subspace $X_n\subset\F_3(\mathbb{R},n)$ consisting of the elements $x=(x_1,\dots,x_n)\in\F_3(\mathbb{R},n)$ with $x_n=0$ and $|x|=1$. For instance
\begin{equation}\label{x3}
X_3=\left\{(x_1,x_2,0)\in \mathbb{R}^3\;\colon\;  |(x_1,x_2)|=1 \mbox{ and } (x_1,x_2)\neq(0,0) \right\}=S^1,
\end{equation}
whereas
\begin{align}
X_4&=\left\{(x_1,x_2,x_3,0)\in\mathbb{R}^4\;\colon\; \begin{minipage}{5.4cm}{\footnotesize $|(x_1,x_2,x_3)|=1$,\\ $(0,0)\not\in\{(x_1,x_2),(x_1,x_3),(x_2,x_3)$\} \\ not all $x_1, x_2, x_3$ are equal}\end{minipage}\right\}\nonumber\\
&=S^2-\{\pm(0,0,1),\pm(0,1,0),\pm(1,0,0),\pm\frac1{\sqrt{3}}(1,1,1)\} \nonumber\\
&=\mathbb{R}^2-\left\{\mbox{7 points}\right\}\simeq\bigvee_7 S^1.\label{x4}
\end{align}

\begin{lemma}[{Compare to~\cite[Section~III]{Harsh}}]\label{retraccion}
Let $\mathbb{R}_+$ stand for the positive real numbers. For $n\ge3$, the map $f\colon\!\!\F_3(\mathbb{R},n)\times\mathbb{R}_+\times\mathbb{R}\to\F_3(\mathbb{R},n)$ sending the triple $((x_1,\ldots,x_n),r,a)$ into $(x_1r+a,\ldots,x_nr+a)$ yields, by restriction, a homeomorphism $X_n\times\mathbb{R}_+\times\mathbb{R}\cong\F_3(\mathbb{R},n)$. Consequently, the subspace inclusion $X_n\hookrightarrow\F_3(\mathbb{R},n)$ is a homotopy equivalence.
\end{lemma}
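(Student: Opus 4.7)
The plan is to exhibit an explicit two-sided continuous inverse for the restricted map $f\colon X_n\times\mathbb{R}_+\times\mathbb{R}\to\F_3(\mathbb{R},n)$ and then deduce the homotopy equivalence by factoring the inclusion.

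First, I would verify that $f$ does land in $\F_3(\mathbb{R},n)$: for any $r>0$ and $a\in\mathbb{R}$, the affine map $t\mapsto rt+a$ is injective, so $x_ir+a=x_jr+a=x_kr+a$ is equivalent to $x_i=x_j=x_k$; hence the no-$3$-equal condition is preserved. (Nothing at this stage uses that the input lies in $X_n$.) Continuity is evident.

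Next, I would build the inverse $g\colon\F_3(\mathbb{R},n)\to X_n\times\mathbb{R}_+\times\mathbb{R}$ by the natural ``translate so that the last coordinate vanishes, then rescale to unit norm.'' Concretely, given $y=(y_1,\dots,y_n)\in\F_3(\mathbb{R},n)$, set $a=y_n$, $r=|(y_1-y_n,\dots,y_{n-1}-y_n,0)|$, and $x=(y_1-a,\dots,y_n-a)/r$; then $g(y)=(x,r,a)$. The key well-definedness point (and the step I expect to require a short but genuinely necessary comment) is that $r>0$: if $r=0$, then $y_1=\cdots=y_n$, which for $n\ge3$ is forbidden by the no-$3$-equal condition. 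Clearly $x_n=0$ and $|x|=1$, so $x\in X_n$, and $x\in\F_3(\mathbb{R},n)$ because the no-$3$-equal condition is invariant under the affine rescaling used. Both $a$ and $r$ depend continuously on $y$ (since $r>0$ is bounded away from $0$ locally), so $g$ is continuous. The identities $f\circ g=\mathrm{id}$ and $g\circ f=\mathrm{id}$ are straightforward calculations: for $f\circ g$ note $y_n=a$ and $x_ir+a=(y_i-a)+a=y_i$; for $g\circ f$ note that applying $t\mapsto rt+a$ to an element of $X_n$ sends the last coordinate to $a$ and scales the norm by $r$.

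Finally, for the homotopy equivalence consequence, observe that $f(x,1,0)=x$, so the subspace inclusion $\iota\colon X_n\hookrightarrow\F_3(\mathbb{R},n)$ factors as
\[
X_n\xrightarrow{\;x\mapsto(x,1,0)\;}X_n\times\mathbb{R}_+\times\mathbb{R}\xrightarrow{\;f\;}\F_3(\mathbb{R},n).
\]
The first arrow is a homotopy equivalence because $\mathbb{R}_+\times\mathbb{R}$ is contractible (with the straight-line homotopy to $(1,0)$), and $f$ is a homeomorphism by the previous paragraph; hence $\iota$ is a homotopy equivalence. The only mild obstacle in executing this plan is the verification that $r>0$, but this is an immediate consequence of the no-$3$-equal hypothesis together with $n\ge3$.
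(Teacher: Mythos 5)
Your proposal is correct and follows essentially the same route as the paper: the same explicit inverse (translate by $y_n$, rescale by the norm $r$, with $r>0$ guaranteed by the no-$3$-equal condition and $n\ge3$), and the same factorization of the inclusion through $x\mapsto(x,1,0)$. No issues.
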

\begin{proof}
For the first assertion, it is straightforward to check that the inverse of the restriction of $f$ to $X_n\times\mathbb{R}_+\times\mathbb{R}$ is given by the map $g\colon\F_3(\mathbb{R},n)\to X_n\times\mathbb{R}_+\times\mathbb{R}$ sending $(x_1,\ldots,x_n)$ into the triple
$$
\left( \frac{1}{N}(x_1-x_n,\ldots,x_{n-1}-x_n,0),N,x_n \right),
$$
where $N$ stands for the norm of $(x_1-x_n,\ldots,x_{n-1}-x_n,0)$. Note that $N\in\mathbb{R}_+$ since $(x_1,\ldots x_n)\in\F_3(\mathbb{R},n)$ and $n\ge3$. For the second assertion, note that the composite $X_n\hookrightarrow\F_3(\mathbb{R},n)\cong X_n\times\mathbb{R}_+\times\mathbb{R}$ takes the form $x\mapsto(x,1,0)$.
\end{proof}

Note that~(\ref{unito}) follows at once from~(\ref{x3}), (\ref{x4}) and Lemma~\ref{retraccion}. A similar proof of~(\ref{trecito}) is also possible; this involves the use of the stereographic projection from the pinched 3-sphere to the 3-dimensional euclidean space in order to express $X_5$ as the complement in $\mathbb{R}^3$ of ten unlinked and untangled curves. Details are omitted.\footnote{In private communications, Harshman and Knapp report having also carried out the reductions for $X_5$ analogous to~(\ref{x3}) and~(\ref{x4}), and which lead to a geometric verification of the fact that $\pp_5$ is free of rank~31.}

Lemma~\ref{retraccion} can be used to construct optimal motion planners on $\F_3(\mathbb{R},n)$ for small values of $n$. Details are based on a couple of reductions using the following standard observation (see~\cite[Theorem~3]{Far}): Assume $\alpha\colon X\to Y$ is a homotopy equivalence with homotopy inverse $\beta\colon Y\to X$. Fix a homotopy $H\colon X\times [0,1]\to X$ between $H_0=\beta\circ\alpha$ and the identity $H_1=Id\colon X\to X$. Assume $s\colon U\to P(Y)$ is a local rule for $Y$ (i.e.~a section for the double evaluation map $e_{0,1}\colon P(Y)\to Y\times Y$) on the open set $U\subseteq Y\times Y$, and set $V=(\alpha\times\alpha)^{-1}(U)$. Then a local rule $\sigma\colon V\to P(X)$ for $X$ is defined through the formula
$$
\sigma(x_1,x_2)(t)=\begin{cases} H(x_1,3t), & \mbox{for $0\le t\le 1/3$;} \\
\beta(s(\alpha(x_1),\alpha(x_2))(3t-1)), & \mbox{for $1/3\leq t\leq 2/3$;} \\
H(x_2,3(1-t)), & \mbox{for $2/3\leq t\le1$.} \end{cases}
$$

Applying the construction above to the homotopy equivalence $$F=\left(\F_3(\mathbb{R},n)\stackrel{\cong\;}\to X_n\times\mathbb{R}_+\times\mathbb{R}\stackrel{\mbox{\tiny proj}\;\,}{\longrightarrow}X_n\right),$$ we see that it suffices to describe an optimal motion planner on $X_n$. (Explicit formulae for $F$, the needed homotopy inverse $G$, and the needed homotopy between the identity and the composite $G\circ F$ are easily deduced from the proof of Lemma~\ref{retraccion}.) In turn, since $X_n$ has the homotopy type of a wedge of circles (for $n\le5$), and since explicit optimal motion planners for finite wedges of spheres have been described in~\cite{CohenPruid08} (see also~\cite{GGY}), it suffices to describe explicit homotopy equivalences (going in both directions) $X_3\simeq S^1$, $X_4\simeq\vee_7S^1$ and $X_5\simeq\vee_{31}S^1$. The latter task has been accomplished in~(\ref{x3}) and~(\ref{x4}) for $n=3$ and $n=4$, where an obvious stereographic projection is needed in the latter case. The resulting motion planner in $\F_3(\mathbb{R},3)$ is spelled out next.

\begin{example}{\em
Let $D_0$ be the subspace of $\F_3(\mathbb{R},3)\times\F_3(\mathbb{R},3)$ consisting of pairs $(x,y)$ such that the line segment $[x,y]$ in $\mathbb{R}^3$ from $x$ to $y$ does not intersect the diagonal $\Delta=\{(z,z,z)\colon z\in\mathbb{R}\}$, and let $D_1$ be the complement of $D_0$ in $\F_3(\mathbb{R},3)\times\F_3(\mathbb{R},3)$. Both $D_0$ and $D_1$ are ENR's, so it suffices to describe a local rule on each. Motion planning in $\F_3(\mathbb{R},3)$ for points $(x,y)\in D_0$ can be done by following the segment $[x,y]$. On the other hand, for $(x,y)\in D_1$, let $p(x,y)$ be the point where the segment $[x,y]$ intersects $\Delta$. Since the vectors $y-x$ and $(1,1,1)$ are linearly independent, their cross product $u(x,y)$ is nonzero. We then motion plan in $\F_3(\mathbb{R},3)$ from $x$ to $y$ (with $(x,y)\in D_1$) by following first the segment $[x,p(x,y)+u(x,y)]$, and then the segment $[p(x,y)+u(x,y),y]$. 
}\end{example}

\section{Configuration spaces with controlled collisions}\label{sectioncontrolled}

No-$k$-equal configuration \red{spaces are special instances of configuration spaces} with collisions \red{controlled} by a simplicial complex. Fix a space $X$, a positive integer $n$, and a simplicial complex $K$ with vertex set $\{1,\ldots,n\}$. Let $\Delta^{n-1,d}$ stand for the $d$-dimensional skeleton of $\Delta^{n-1}$, the simplex spanned by $\{1,\ldots,n\}$.

For a subset $\sigma\subseteq\{1,\ldots,n\}$ consider the partial diagonal subspace
$$
D_\sigma:=\{(x_1,\ldots, x_n)\in X^n\colon \card(\{x_i\colon i\in\sigma\})=1\}.
$$

\begin{definition}\label{confcontrolled}
The $K$-diagonal in $X^n$ is $D_K=\bigcup_{\sigma}D_\sigma,$ where $\sigma$ runs over the non-faces of $K$. We set
$$
\F_K(X,n)=X^n - D_K,
$$
which we call the configuration space of $n$ points in $X$ with collisions controlled by $K$.
\end{definition}

By definition, for $(x_1,\ldots,x_n)\in\F_K(X,n)$, a multiple collision $x_{i_0}=x_{i_1}=\cdots=x_{i_d}$ can hold only if $\{i_0,i_1,\ldots,i_d\}\in K$.

Note that $D_\sigma\subseteq D_\tau$ provided $\tau\subseteq\sigma$. Consequently:
\begin{enumerate}
\item $\F_L(X,n)\subseteq\F_K(X,n)$, provided $L$ is a subcomplex of $K$.
\item $\F_K(X,n)=X^n-\bigcup_\sigma D_\sigma$, where $\sigma$ runs over the \emph{minimal} non-faces of $K$.
\end{enumerate}
For instance:
\begin{enumerate}\addtocounter{enumi}{2}
\item $\F_{\Delta^{n-1}}(X,n)=X^n$.
\item $\F_{\Delta^{n-1,0}}(X,n)=\F(X,n)$, the usual configuration space.
\item $\F_{\Delta^{n-1,k-2}}(X,n)=\F_k(X,n)$, the no-$k$-equal configuration space of $X$.
\end{enumerate}

\red{Although} configuration spaces with collisions controlled by simplicial complexes \red{have appear previously in the literature, most of their algebraic topology properties are presently unknown. An interesting challenge for future research is to compute their mod 2 cohomology algebras, say as modules over the Steenrod algebra, in terms of the combinatorial properties of the controlling simplicial complexes. As in the case of $\F_k(\mathbb{R},n)$, this might give enough information to compute the topological complexity of $\F_K(\mathbb{R},n)$ as a function of $n$ and $K$.}


\medskip
{\small \sc Departamento de Matem\'aticas

Centro de Investigaci\'on y de Estudios Avanzados del I.P.N.

Av.~Instituto Polit\'ecnico Nacional n\'umero 2508

San Pedro Zacatenco, M\'exico City 07000, M\'exico

{\tt jesus@math.cinvestav.mx}

{\tt jlleon@math.cinvestav.mx}}

\bigskip\smallskip
{\small \sc Instituto de Matem\'aticas

Universidad Nacional Aut\'onoma de M\'exico

Le\'on No.2, Altos, Oaxaca de Ju\'arez 68000, M\'exico

{\tt croque@im.unam.mx}}

\end{document}